\newtheorem{remark}{Remark}[section]
\newtheorem{example}{Example}
\newtheorem{theorem}{Theorem}[section]
\newtheorem{Prop}{Proposition}[section]
\newcommand{\reeb}{\mathcal{R}} 
\newcommand{\cH}{\mathcal{H}} 
\newcommand{\vH}{X_{\cH}} 
\newcommand{\vecq}{\frac{\partial}{\partial q^i}} 
\newcommand{\vecp}{\frac{\partial}{\partial p_i}}
\newcommand{\vecs}{\frac{\partial}{\partial s}}
\newcommand{\ch}{h}
\newcommand{\cO}{\mathcal{O}}
\title{Geometric numerical integration of Li\'enard systems via a contact Hamiltonian approach}
\newbox{\myorcidaffilbox}
\sbox{\myorcidaffilbox}{\large\includegraphics[height=1.7ex]{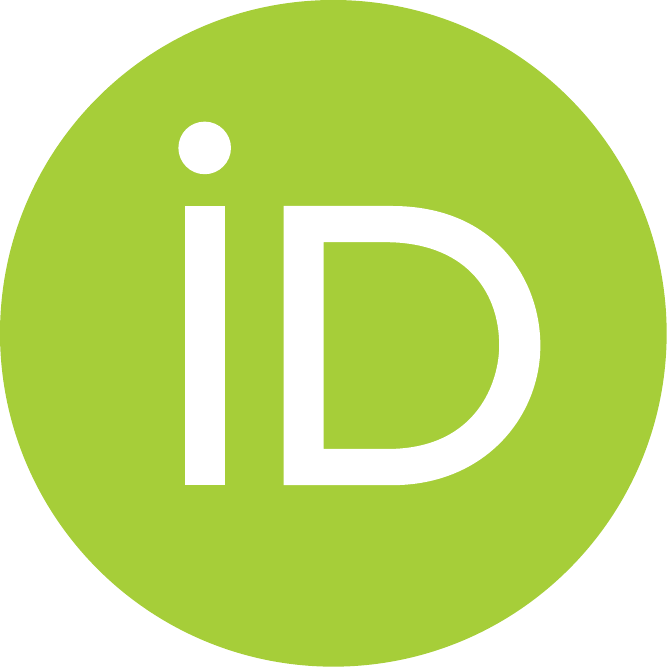}}
\newcommand{\orcidaffil}[1]{%
	\href{https://orcid.org/#1}{\usebox{\myorcidaffilbox}}}
\author[1]{Federico Zadra \orcidaffil{0000-0001-7565-3792}}
\author[2]{Alessandro Bravetti \orcidaffil{0000-0001-5348-9215}}
\author[3]{Marcello Seri \orcidaffil{0000-0002-8563-5894}}
\affil[1]{Bernoulli Institute for Mathematics, Computer Science and Artificial Intelligence, 
\authorcr\small Groningen, The Netherlands,
\authorcr\normalsize \texttt{f.zadra@rug.nl}}
\affil[2]{Instituto de Investigaciones en Matem\'aticas Aplicadas y en Sistemas (IIMAS--UNAM), \authorcr\small Mexico City, Mexico,
 \authorcr\normalsize \texttt{alessandro.bravetti@iimas.unam.mx}}
\affil[3]{Bernoulli Institute for Mathematics, Computer Science and Artificial Intelligence, 
\authorcr\small Groningen, The Netherlands,
\authorcr\normalsize \texttt{m.seri@rug.nl}}
\date{}
\begin{document}

\maketitle
\begin{abstract}
Starting from a contact Hamiltonian description of Li\'enard systems, we introduce  
a new family of explicit geometric integrators for these nonlinear dynamical systems. 
Focusing on the paradigmatic example of the van der Pol oscillator, we demonstrate that these integrators are particularly stable and preserve the qualitative features of the dynamics, even for relatively large values of the time step and in the stiff regime.

\medskip
    \noindent
    \textbf{Keywords:} contact geometry, geometric integrators, Li\'enard systems, nonlinear oscillations

    \noindent
    \textbf{MSC2010:} 65D30, 34K28, 34A26, 34C15
\end{abstract}

\section{Introduction}

Li\'enard systems are a class of 2-dimensional nonlinear dynamical systems that exhibit a stable limit cycle. Among them the most famous is the van der Pol oscillator~\cite{lienard1928,vanderpol1920}.
Due to the existence of a stable limit cycle, such systems are of the utmost importance in modelling natural phenomena such as e.g.~electrical circuits and neuronal dynamics,
and therefore an accurate investigation of their dynamics is required.
However, because of the nonlinear nature of such systems, analytical results are scarce and one has to recur to perturbative techniques and numerical integration.

An immediate and paramount problem for both the development of perturbative techniques and of stable numerical schemes is the lack of a geometric structure. 
Indeed, apart from very specific cases in which some integrability conditions 
are satisfied, and where one can use the Jacobi Last Multiplier to find a Lagrangian or Hamiltonian structure~\cite{nucci2010lagrangians,carinena2019nonstandard}, in the general case such pursuit is hopeless.
For instance,
many Li\'enard systems present an attractor, a stable limit cycle, and thus they cannot be Hamiltonian in the symplectic sense.
There have been several attempts in the literature in order to circumvent this problem. In~\cite{choi1973extended} the authors suggested to enlarge the phase--space to
a 4-dimensional manifold and define a particularly simple Hamiltonian system in this enlarged space so that the 2-dimensional projection onto the original space
recovers the original dynamics, and then they showed that this approach allows for the use of perturbative methods.
In~\cite{Shah2015}, the classical Bateman trick for the harmonic oscillator has been extended to the van der Pol oscillator and then further generalised 
to all Li\'enard systems with a quadratic potential.
Both these approaches involve a 4-dimensional phase--space and in both the authors have focused on the perturbation theory 
and have not explored the consequences of the Hamiltonisation for the numerical integration.
From yet another perspective, in~\cite{ChRaSt2018} the authors have presented various splitting schemes for ``conditionally linear systems'' (these include Li\'enard systems) 
which, although not being geometric, are based on the standard splitting schemes for symplectic Hamiltonian systems, and showed good qualitative and quantitative results.

In this work we contribute to the advancement of geometric integration for Li\'enard systems by using Hamiltonian flows on contact manifolds.
Contact geometry was introduced in Sophus Lie's study of differential equations, and has been the subject of an intense research, especially
related to low-dimensional topology~\cite{geiges2008introduction}.
In recent years, contact Hamiltonian systems have found many applications, 
first in the context of thermodynamics~\cite{Mrugala_1990, vdsm2018, bravetti2019contact} and, more recently, in the context of the Hamiltonisation of several dissipative dynamical 
systems~\cite{bravetti2016thermostat,Bravetti2017,Bravetti2020,vermeeren2019contact,gaset2019new,gaset2020contact, ciaglia2018contact}. 
The large number of applications of contact systems that have appeared recently motivated research on geometric numerical integration~\cite{Bravetti2020,simoes2020geometry,vermeeren2019contact}.
Fortunately, contact flows possess geometric integrators (both variational and Hamiltonian) that precisely parallel their symplectic counterparts, and
therefore they show remarkable numerical and analytical properties such as e.g.~increased stability, near-preservation of invariant quantities, and modified Hamiltonians.

In this work, leveraging some of the ideas in~\cite{choi1973extended}, we start a treatment of Li\'enard systems from the point of view of contact Hamiltonian systems:
we show that they can be given a particularly simple Hamiltonian formulation on a 3-dimensional
contact manifold, and then we use this Hamiltonisation to construct splitting integrators for such systems and analyse their properties from an analytical point of view, exploiting the modified equations.
Along the work we use the van der Pol oscillator as a paradigmatic example.

Our results show that the resulting geometric integrators are very stable, even when the system is stiff, and they preserve the qualitative features of the limit cycle even for large values of 
the time step, which permits to spare computational resources and is of primal importance in applications to e.g.~neuronal dynamics~\cite{ChRaSt2018}.
Moreover, from the use of the modified equations, we can prove analytical results on the preservation and the period of the limit cycle that show a very good agreement with the numerical simulations.

The paper is organised as follows:
in Section~\ref{sec:CHLienard}
we provide a Hamiltonian formulation of Li\'enard systems based on contact Hamiltonian dynamics, and then
in Section~\ref{sec:GeomInt}
we introduce a new class of explicit geometric integrators for these systems that are naturally derived by splitting the Hamiltonian.
Then in Sections~\ref{sec:numericalandanalytical}
and~\ref{sec:forcedvdP}
we thoroughly analyse the properties of these integrators both analytically and numerically by investigating the benchmark example of the van der Pol oscillator. We conclude in Section~\ref{sec:conclusions} with a discussion and a perspective on future work.

All the simulations are reproducible with the code provided in \cite{Zadra2020}.

\section{A contact Hamiltonian formulation of Li\'enard systems}\label{sec:CHLienard}

\subsection{A brief review of Li\'enard Systems}
\label{sec:reviewLi\'enardsystems}
Li\'enard systems are a family of planar coupled differential equations of the form~\cite{lienard1928}
	\begin{equation}
	\begin{cases}
	\dot{x} = y - F(x) \\
	\dot{y} = - g(x)
	\end{cases}, \label{Li\'enardsyst}
	\end{equation}
where $F(x)$ is the antiderivative of an even function $f(x)$ and $g(x)$ is an odd function.
Alternatively, \eqref{Li\'enardsyst} is equivalent to the second order scalar equation
	\begin{equation}
	\ddot{x}=- f(x) \dot{x}-g(x). \label{Li\'enard}
	\end{equation}
A third equivalent version of~\eqref{Li\'enardsyst} is given by
	\begin{equation}
	\begin{cases}
	\dot{x}=y,\\
	\dot{y}=-g(x) - f(x) y\,.
	\end{cases} \label{eq:Li\'enard2d}
	\end{equation}

	\begin{example}[The van der Pol oscillator]\label{ex:vdP}
	Perhaps the most famous example of the family of Li\'enard systems is the van der Pol
	oscillator, which can be written using dimensionless variables as follows
		\begin{equation}
		\ddot{x}=\epsilon(1-x^2)\dot{x}-x\,, \label{vanderpol}
		\end{equation}
	and can be equivalently rewritten in the form~\eqref{eq:Li\'enard2d} as
		\begin{equation}\label{eq:vdp2}
		\begin{cases}
		\dot{x}=y,\\
		\dot{y}=-x + \epsilon(1-x^{2}) y\,,
		\end{cases}
		\end{equation}
	from which we recognise that in this case $f(x)=-\epsilon(1-x^{2})$ and $g(x)=x$.
	\end{example}

A crucial property of Li\'enard systems is encoded in the following theorem, guaranteeing the existence and uniqueness of a stable limit cycle for a large class of systems~\cite{Perko1991}.
	\begin{theorem}[Li\'enard's Theorem]
	Under the conditions
		\begin{itemize}
		\item $F, g \in C^1(\mathbb{R})$,
		\item $xg(x)>0$ if $x \neq 0$,
		\item $F(0) = 0$ and $f(0)<0$,
		\item $F(x)$ has exactly one positive zero at $x=a$, is monotone increasing for $x>a$ and $\displaystyle\lim_{x\to+\infty}F(x) = +\infty$; 
		\end{itemize}
	the dynamical system \eqref{Li\'enardsyst} presents a unique, stable limit cycle. 
	\end{theorem}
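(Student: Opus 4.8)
Liénard's theorem is a classical result; the standard approach is via the Poincaré–Bendixson theorem combined with a careful construction of an invariant region and a symmetry/uniqueness argument. I would proceed in four stages.

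First, I would exploit the symmetry of the system. Since $f$ is even and $g$ is odd, the vector field in the form~\eqref{Li\'enardsyst} is invariant under $(x,y)\mapsto(-x,-y)$, so any limit cycle is either symmetric about the origin or comes in pairs; and since trajectories cross the $y$-axis horizontally (where $\dot x = y - F(x) = y$) and the curve $y = F(x)$ vertically (where $\dot y = -g(x)$), the flow has a clear rotational structure. I would then show the origin is the unique equilibrium (from $xg(x)>0$ for $x\neq 0$) and that it is unstable: linearizing gives $\dot x = y$, $\dot y = -g'(0)x - f(0)y$ with $g'(0)>0$ (forced by $xg(x)>0$) and $f(0)<0$, so the origin is an unstable focus or node. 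This rules out the equilibrium as an $\omega$-limit set.

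Second, I would build a bounded trapping region. Using the "energy" function $E(x,y) = \tfrac12 y^2 + G(x)$ with $G(x) = \int_0^x g(s)\,ds$, one computes $\dot E = -g(x)F(x)$ along trajectories. The hypotheses on $F$ (negative for $0<x<a$, positive and increasing for $x>a$, tending to $+\infty$) and on $g$ let me show that for a trajectory starting high up on the positive $y$-axis, $E$ increases near the origin but the net change over one "loop" (from the positive $y$-axis back to the negative $y$-axis, then closing up by symmetry) is negative once the loop is large enough. The delicate estimate is to split the loop integral of $g(x)F(x)\,dt$ into the region $\{x<a\}$, where it is negative (energy gain), and $\{x>a\}$, where it is positive (energy loss), and to show the loss dominates for large loops by using $F(x)\to+\infty$. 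This produces an outer boundary; together with the unstable origin (an inner boundary via a small circle on which the flow points outward) and Poincaré–Bendixson, it yields existence of at least one limit cycle.

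Third, for uniqueness I would show that the energy change over a loop, as a function of the loop's "size" (say the $x$-intercept or the value of $E$ where the loop crosses the positive $x$-axis), is strictly monotone decreasing. The standard trick is to compare two nested loops: on the portion with $x<a$ the inner loop gains more energy, on the portion with $x>a$ the outer loop loses more, using monotonicity of $F$ for $x>a$ and a change of variables to $F$ as integration variable on the branches where $F$ is monotone. Hence there is exactly one loop with zero net energy change, i.e.\ exactly one closed orbit, and its monotone approach forces stability.

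\textbf{Main obstacle.} The hard part is the uniqueness/monotonicity step: making rigorous the comparison of the energy increment along two nested loops requires carefully parametrizing the loop by $F(x)$ on the monotone branches and handling the non-monotone branch $0<x<a$ separately, and this is where the full strength of the structural hypotheses on $F$ is used. The trapping-region construction is routine once the right energy function is chosen, and the instability of the origin is immediate from the linearization; but I expect the bulk of the work — and the only genuinely subtle estimates — to lie in showing the loop energy functional is strictly decreasing.
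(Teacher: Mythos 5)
The paper does not prove this theorem at all: it is quoted as a classical result and attributed to the literature (the reference~\cite{Perko1991}), so there is no internal proof to compare against. Your outline is essentially the standard textbook argument found in that reference --- uniqueness of the equilibrium, a Poincar\'e--Bendixson trapping region built from the energy $E(x,y)=\tfrac12 y^2+G(x)$ with $\dot E=-g(x)F(x)$, and uniqueness plus stability via strict monotonicity of the loop-energy increment, exploiting the monotonicity of $F$ for $x>a$. So as a plan it is the right one, and you correctly identify where the real work lies (the nested-loop comparison).

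One concrete error, though: you claim that $xg(x)>0$ for $x\neq 0$ forces $g'(0)>0$, and you use this to get instability of the origin from the linearization. That implication is false --- $g(x)=x^3$ satisfies the hypothesis with $g'(0)=0$, and then the linearization is degenerate (eigenvalues $0$ and $-f(0)$), so it cannot certify a repelling equilibrium. Under the stated hypotheses the inner boundary of the annular region must instead come from the energy function itself: since $F$ is odd with $F(0)=0$, $f(0)<0$ and its only positive zero at $a$, one has $F(x)<0$ on $(0,a)$, hence $\dot E=-g(x)F(x)>0$ throughout the strip $0<|x|<a$ (vanishing only on $x=0$), so every small loop around the origin strictly gains energy and trajectories spiral outward across a small level set of $E$. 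With that repair your step one is fine and the rest of the plan goes through as in the classical proof; the remaining gap is simply that the uniqueness estimate (splitting the loop integral at $x=a$ and reparametrizing by $F$ on the monotone branches) is described but not executed, which you acknowledge.
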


In particular, the theorem above implies that the van der Pol equation \eqref{vanderpol} with $\epsilon>0$ has a unique, stable limit cycle. 

For additional information on the classical approach to the analysis of Li\'enard systems we refer to~\cite{Perko1991}.

\subsection{A brief review of contact Hamiltonian systems}
\label{sec:contactgeometryandintegrators}
Similarly to the fact that a symplectic manifold is a $2n$-dimensional differentiable manifold endowed with a 2-form $\omega$ that is closed ($d\omega=0$) and non-degenerate ($\omega^{n}\neq0$),
\emph{an exact contact manifold} $M$ is a $(2n+1)$-dimensional manifold endowed with a 1-form $\eta$, called \emph{the contact form}, that is non-degenerate, which means 
\begin{equation}
\eta \wedge (d\eta)^n\neq0.
\end{equation}

A contact version of Darboux's theorem~\cite{Aroldmechanics} guarantees the local existence of coordinates $(q^i,p_i,s)$ -- called \emph{Darboux coordinates} -- 
which permit to express the contact form
as $\eta = ds - p_i dq^i$, where Einstein's summation convention over repeated indices is being used here and in the following.

The contact form allows us to define in a natural way the concept of a Hamiltonian vector field on $M$. 
Let $\cH$ be a real function on $M$, then \emph{the contact Hamiltonian vector field $\vH$ associated with $\cH$} is defined by
	\begin{equation}
	\iota_{\vH} \eta = -\cH \qquad \iota_{\vH} d \eta = d\cH - \left(\iota_\reeb \cH \right) \eta\,,
	\end{equation}
where $\iota_{X}$ is the interior product and $\reeb$ is the Reeb vector field corresponding to $\eta$ \cite{geiges2008introduction}.

In Darboux coordinates $\vH$ takes the form
\begin{equation}
	\vH =\underbrace{ \left(\frac{\partial \cH}{\partial p_i} \right)}_{\dot{q}} \vecq + \underbrace{\left(-p_i \frac{\partial \cH}{\partial s} - 
	\frac{\partial \cH}{\partial q^i} \right)}_{\dot{p}} \vecp + \underbrace{\left(p_i \frac{\partial \cH}{\partial p_i} - \cH \right)}_{\dot{s}} \vecs, \label{eq:hamvecfield}
\end{equation}
Finally, contact manifolds carry a natural bracket structure, called \emph{the Jacobi bracket}, which yields a Lie algebra on smooth functions on $M$
and is defined as
\begin{equation}
\left\{f,g\right\}_\eta:=-\iota_{[X_f,X_g]}\eta.
\end{equation}
Again, in Darboux coordinates the Jacobi bracket reads  
\begin{equation}
\{g,f\}_\eta = \left(g \frac{\partial f}{\partial s} - \frac{\partial g}{\partial s} f\right) + p_{\mu} \left(\frac{\partial g}{\partial s} \frac{\partial f}{\partial p_{\mu}} - \frac{\partial g}{\partial p_{\mu}} \frac{\partial f}{\partial s}\right) + 
\left(\frac{\partial g}{\partial q^{\mu}} \frac{\partial f }{\partial p_{\mu}} - \frac{\partial g}{\partial p_{\mu}} \frac{\partial f }{\partial q^{\mu}}\right). \label{eq:jacobibracketscoord}
\end{equation}
We refer the reader to \cite{Aroldmechanics,de2019contact,Bravetti2017,bravetti2019contact,gaset2019new} for further details.
For our scope, it will be important in the following to have an explicit expression for the Jacobi bracket of monomial functions, that is,
\begin{align}
\left\{\mu q^i  p^j s^r, \bar{\mu}  q^{\bar{i}}  p^{\bar{j}} s^{\bar{r}} \right\}_\eta &
=
\mu \bar{\mu}\left(\left[(1-j) \bar{r}+(\bar{j}-1) r \right]  q^{i+\bar{i}} p^{j+\bar{j}} s^{r+\bar{r}-1} \label{eq:monomial} 
+\left(i \bar{j} -\bar{i} j\right) q^{i+\bar{i}-1} p^{j+\bar{j}-1} s^{r+\bar{r}}\right),
\end{align}
where $\mu,\bar{\mu} \in \mathbb{R}$ and $i,j,r,\bar{i},\bar{j},\bar{r}\in \mathbb{N}.$

\subsection{A contact Hamiltonian formulation of Li\'enard Systems}
It is well known that any dynamical system on an $n$-dimensional manifold $Q$ of the form $\dot x^{i}=X^{i}(x)$ 
can be extended to a Hamiltonian system defined on the $2n$-dimensional phase--space $T^*Q$.
This can be achieved with the introduction of
the conjugate momenta $\tilde p_{i}$
in order to define the Hamiltonian
\begin{equation}\label{pXH-compact}
H(x,\tilde p)=\tilde p_{i}X^{i}(x)\,.
\end{equation} 
A direct
computation
shows that when we consider only the dynamics on the original $x$-variables, then we recover the original $n$-dimensional system.
For instance, in the case of Li\'enard systems~\eqref{eq:Li\'enard2d}, the Hamiltonian reads
\begin{equation}\label{pXH}
	H(x,y,\tilde p_{1},\tilde p_{2})=\tilde p_{1}y-\tilde p_{2}(g(x)+f(x)y),\quad (\tilde p_1, \tilde p_2) = (\tilde p_{x},\tilde p_{y})\,.
\end{equation}
In~\cite{choi1973extended}, such approach has been used to derive a Hamiltonisation of Li\'enard systems in such extended phase--space 
that was then shown to be useful to perform perturbation theory. 
Moreover, in~\cite{Pihajoki2014} a similar extension, but with a suitably defined new Hamiltonian that non-trivially couples the variables, has been used in order to develop geometric integrators in
the extended phase--space and then used e.g.~in the case of the van der Pol oscillator.

In principle one could use the Hamiltonian~\eqref{pXH} and perform a splitting in order to obtain new geometric integrators that are symplectic in the extended phase--space.
However, we see from the form of~\eqref{pXH} that it is linear in the momenta, meaning that
it is naturally associated with a contact Hamiltonian on the $(2n-1)$-dimensional projectivised cotangent bundle $PT^{*}Q$, endowed with the contact structure inherited from the canonical symplectic structure of $T^*Q$~\cite{arnol2013mathematical,blair2010riemannian}.
The procedure to perform such reduction is quite simple in this case and it is reviewed e.g.~in the recent work~\cite{vdsm2018}. 
In order to avoid clutter of notation, from now on we focus on the case $Q=\mathbb{R}^2$, which is the relevant case for our study:
we start with~\eqref{pXH} and consider a connected component of the open set in which $\tilde p_{2}\neq 0$. 
On such set we can define the coordinates $(q=x,s=y,p=-\frac{\tilde p_{1}}{\tilde p_{2}})$, which serve as Darboux coordinates on $PT^*\mathbb R^2$. 
Finally, we define the contact Hamiltonian 
\begin{equation}\label{cpXH}
\cH(q,p,s)=-\frac{1}{\tilde p_{2}}H(x,y,\tilde p_1,\tilde p_2)=pX^{1}(q,s)-X^{2}(q,s)\,.
\end{equation}
A direct calculation then shows that the restriction of the resulting contact Hamiltonian system to the $(q,s)$ plane recovers the original system.

By means of the above prescription, we arrive at the following result for Li\'enard systems.
\begin{theorem}[Hamiltonisation of Li\'enard systems]\label{th:contactLi\'enard}
Li\'enard systems are contact Hamiltonian systems, with Hamiltonian of the form
\begin{equation}
\cH = p s + f(q) s + g(q). \label{contactLi\'enard}
\end{equation}
The associated contact Hamiltonian system is
\begin{align}
\dot{q}&= s \label{dotqLi\'enard}\\
\dot{s}&= - f(q) s - g(q), \label{dotsLi\'enard}\\
\dot{p}&= - p^2 - f(q) p - f'(q) s -g'(q)\,. \label{dotpLi\'enard}
\end{align}
From the first two equations we recover the original Li\'enard system in the $(q,s)$-space, while the third equation is decoupled.
\end{theorem}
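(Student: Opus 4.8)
The plan is to read off the contact Hamiltonian \eqref{contactLi\'enard} from the general reduction scheme \eqref{pXH}--\eqref{cpXH} and then to verify the stated equations of motion by a direct computation with the coordinate formula \eqref{eq:hamvecfield}; one may equally well take \eqref{contactLi\'enard} as an ansatz and skip the reduction, checking only that it produces the claimed vector field.

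First I would specialise the construction to the Li\'enard system written in the form \eqref{eq:Li\'enard2d}, for which $X^1(x,y)=y$ and $X^2(x,y)=-g(x)-f(x)y$. On a connected component of $\{\tilde p_2\neq 0\}\subset T^*\mathbb{R}^2$ I introduce the Darboux coordinates $q=x$, $s=y$, $p=-\tilde p_1/\tilde p_2$ and set $\cH=-H/\tilde p_2$. Since $H=\tilde p_1 X^1+\tilde p_2 X^2$, this substitution gives $\cH = p\,X^1(q,s)-X^2(q,s) = ps+f(q)s+g(q)$, which is exactly \eqref{contactLi\'enard}.

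Second, I would insert this $\cH$ into \eqref{eq:hamvecfield} with $n=1$. The relevant partial derivatives are $\partial\cH/\partial p = s$, $\partial\cH/\partial q = f'(q)s+g'(q)$ and $\partial\cH/\partial s = p+f(q)$, whence
\begin{align*}
\dot q &= \frac{\partial\cH}{\partial p} = s, \\
\dot s &= p\,\frac{\partial\cH}{\partial p} - \cH = ps - \bigl(ps+f(q)s+g(q)\bigr) = -f(q)s-g(q), \\
\dot p &= -p\,\frac{\partial\cH}{\partial s} - \frac{\partial\cH}{\partial q} = -p\bigl(p+f(q)\bigr) - f'(q)s - g'(q),
\end{align*}
which are precisely \eqref{dotqLi\'enard}--\eqref{dotpLi\'enard}. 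Finally, I would observe that the right-hand sides of the $\dot q$ and $\dot s$ equations contain no $p$, so that $(q,s)$ evolves on its own and, under the identification $(q,s)\equiv(x,y)$, satisfies precisely \eqref{eq:Li\'enard2d}; the $p$-equation is driven by $(q,s)$ but feeds nothing back, hence it is decoupled.

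I do not expect a genuine obstacle: the statement is a verification. The two points worth a moment's attention are that the substitution $p=-\tilde p_1/\tilde p_2$ indeed converts $-H/\tilde p_2$ into $pX^1-X^2$ (so that no spurious $\tilde p_2$-dependence survives), and that the $-\cH$ term in the $\dot s$ component of \eqref{eq:hamvecfield} cancels the $ps$ coming from $p\,\partial\cH/\partial p$, which is exactly what makes the $(q,s)$-block close and the $p$-equation decouple. Both are immediate from the computation above, so the only real work — already carried out in the excerpt — is the set-up of the contact reduction of a momentum-linear Hamiltonian.
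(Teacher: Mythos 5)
Your proposal is correct and follows exactly the paper's own route: the theorem is obtained by specialising the reduction \eqref{pXH}--\eqref{cpXH} to the Li\'enard vector field $X^1=s$, $X^2=-g(q)-f(q)s$ and then verifying the equations of motion by a direct substitution of \eqref{contactLi\'enard} into \eqref{eq:hamvecfield}, which is precisely the ``direct calculation'' the paper invokes. Your computed derivatives and the cancellation producing $\dot s=-f(q)s-g(q)$ are all correct, so nothing is missing.
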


\begin{example}[The van der Pol oscillator revisited]
\label{ex:contactformulationvanderpol}
As we have already seen in Section \ref{sec:reviewLi\'enardsystems} the van der Pol equation is a particular case of a Li\'enard system, which is obtained by choosing $f(x)$ and $g(x)$ as
\begin{equation}
f(x)=-\epsilon (1-x^2), \qquad
g(x)=x\,.
\end{equation}
Consequently the contact Hamiltonian in this case reads
\begin{equation}
	\cH = p s -\epsilon (1-q^2) s +q\,, \label{eq:hamvanderpol}
\end{equation}
and the corresponding contact Hamiltonian systems is
\begin{equation}
	\begin{cases}
	\dot{q} = s\\
	\dot{s} = \epsilon(1-q^2) s - q\\
	\dot{p} = -1-p^2+\epsilon\left[(1-q^2)p -2qs\right]\,.
	\end{cases} \label{eq:contactdynsysvanderpol}
\end{equation}
As expected, from the first two equations we recover the original van der Pol equation~\eqref{vanderpol}.
\end{example}

\begin{remark}
    For $s\neq 0$ and setting the appropriate initial condition $p_0=-f(q_0)-g(q_0)/s_0$, $p(t)$ derived from \eqref{dotpLi\'enard} turns out to be the slope of the tangent $\frac{ds}{dq}$ to the orbit of the system at each point $(q(t), s(t))$ of its evolution.
    This stems from the fact that \eqref{dotqLi\'enard}-\eqref{dotpLi\'enard} are the characteristic equations of the Hamilton-Jacobi equation for \eqref{contactLi\'enard}.
    Details of this derivation are in preparation by \cite{liu:contactext}.
\end{remark}

\begin{remark}
	The reduction procedure that led us to \eqref{cpXH} is not unique.
	Indeed, we could have selected the connected component in which $\tilde p_{1}\neq 0$ and set $(q=y,s=x,p=-\frac{\tilde p_{2}}{\tilde p_{1}})$. The corresponding contact Hamiltonian for Liénard systems is:
	\begin{align}
		\mathcal{K}(q,p,s)
		&= -\frac{1}{\tilde p_{1}}H(x,y,\tilde p_1,\tilde p_2)=pX^{2}(q,s)-X^{1}(q,s)\\
		&= -p(f(s)q+g(s)) -q\,.
	\end{align}
	Beware that in this case $X^1(q,s)=q$ and $X^2(q,s)=-f(s)q-g(s)$, that is, the roles of $q$ and $s$ are switched, and the resulting system is \begin{equation}
	\begin{cases}
	\dot{q} = -f(s)q-g(s)\\
	\dot{s} = q\\
	\dot{p} = 1+pf(s)+p\left(pqf'(s)+g'(s)\right)\,,
	\end{cases} \label{eq:contactvdp2}
	\end{equation}
	which is equivalent to~\eqref{dotqLi\'enard}-\eqref{dotpLi\'enard} for the $(q,s)$ part, but not so much for $p$.
	
	The choice of reduction, in the case at hand, was dictated by numerical convenience: the Hamiltonian $\cH$ from \eqref{cpXH} resulted in a simpler form of the algorithm providing better results.
\end{remark}

\section{Geometric numerical integration of Li\'enard systems}\label{sec:GeomInt}

\subsection{Contact splitting integrators}
\label{sec:contactnumericalintegrators}

Contact splitting integrators are a class of geometric integrators recently introduced in the context of celestial mechanics \cite{Bravetti2020}. 
They are the contact analogues of the well-known symplectic splitting integrators. 

Let $\cH$ be a contact Hamiltonian which is separable into a sum of functions
\begin{equation}
	\cH(q^i,p_i,s) = \sum_{j=1}^{N} \ch_j (q^i,p_i,s).
\end{equation}
Then, the Hamiltonian vector field associated with $\cH$ is separable as well
\begin{equation}
	X_{\cH} = \sum_{j=1}^{N} X_{\ch_j}.
\end{equation}
If moreover, each of the $X_{\ch_{j}}$ is \emph{exactly integrable}, meaning that there exists a closed-form solution for its flow,

then we can approximate the dynamics of $X_{\cH}$ to second order in $\tau$ with contact maps according to the following proposition.
\begin{Prop}[Contact splitting integrators]
	In the hypotheses above, let
	$e^{tX_{\ch_{j}}}$ denote the map given by time-$t$ exact flow of each vector field $X_{\ch_{j}}$, for $j=1,\dots,N$.
	Then
	\begin{equation}
  		S_2(\tau) = e^{\frac{\tau}{2} X_{\ch_1}} e^{\frac{\tau}{2} X_{\ch_2}} \cdots e^{\tau X_{\ch_N}} \cdots e^{\frac{\tau}{2} X_{\ch_2}}e^{\frac{\tau}{2} X_{\ch_1}} \label{eq:secondorderintegrator}
	\end{equation}
	is a second order contact numerical integrator, meaning that each map is a contactomorphism.
\end{Prop}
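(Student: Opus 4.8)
The plan is to establish the two claims in the proposition separately: first that each map $e^{t X_{h_j}}$ is a contactomorphism, and second that the symmetric composition $S_2(\tau)$ in~\eqref{eq:secondorderintegrator} is an approximation of the flow $e^{\tau X_\cH}$ accurate to second order in $\tau$. For the first point, I would recall the standard fact from contact geometry (see~\cite{geiges2008introduction,Aroldmechanics}) that the flow of a contact Hamiltonian vector field $X_h$ is a \emph{conformal} contactomorphism: one computes $\mathcal{L}_{X_h}\eta = (\iota_{X_h} d\eta) + d(\iota_{X_h}\eta) = d\mathcal{H} - (\iota_\reeb\mathcal{H})\eta - d\mathcal{H} = -(\iota_\reeb\mathcal{H})\eta$, using Cartan's magic formula together with the defining equations $\iota_{X_h}\eta = -h$ and $\iota_{X_h}d\eta = dh - (\iota_\reeb h)\eta$. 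Since $\mathcal{L}_{X_h}\eta$ is proportional to $\eta$, the pullback $(e^{t X_h})^*\eta = \lambda_t\, \eta$ for a nowhere-vanishing function $\lambda_t$, so $e^{t X_h}$ preserves the contact distribution $\ker\eta$ and is therefore a contactomorphism. Applying this to each $h_j$ gives that every factor $e^{\frac{\tau}{2}X_{h_j}}$ (and $e^{\tau X_{h_N}}$) in~\eqref{eq:secondorderintegrator} is a contactomorphism.

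Next I would invoke the fact that contactomorphisms form a group under composition: the composition of maps each pulling $\eta$ back to a multiple of itself again pulls $\eta$ back to a multiple of itself (the conformal factors multiply). Hence $S_2(\tau)$, being a finite composition of the contactomorphisms just identified, is itself a contactomorphism for every $\tau$. This settles the ``geometric'' half of the statement and is essentially formal once the single-flow case is in hand.

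For the second-order accuracy I would argue exactly as in the symplectic Strang-splitting case: expand each exponential $e^{t X_{h_j}} = \mathrm{id} + t X_{h_j} + \tfrac{t^2}{2} X_{h_j}^2 + O(t^3)$ as an operator on smooth functions (Lie series), multiply out the symmetric product in~\eqref{eq:secondorderintegrator}, and check that it agrees with $e^{\tau X_\cH}$, where $X_\cH = \sum_j X_{h_j}$, up to and including terms of order $\tau^2$; the first-order term reproduces $\sum_j X_{h_j}$, and the palindromic arrangement of the half-steps forces all $\tau^2$ commutator contributions to cancel in pairs, so the leading error is $O(\tau^3)$ per step. The only structural input needed is that each $X_{h_j}$ is exactly integrable, which is assumed, so that the symbols $e^{t X_{h_j}}$ denote genuine flow maps rather than formal series. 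I would remark that this symmetry argument is independent of the geometry (symplectic versus contact) and depends only on the Lie-algebraic structure of composition of flows.

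The main obstacle, if any, is purely expository rather than mathematical: one must be careful that the $X_{h_j}$ do \emph{not} commute in general (their Jacobi bracket $\{h_i,h_j\}_\eta$ need not vanish), so the naive identity $e^{\tau X_\cH} = \prod_j e^{\tau X_{h_j}}$ fails; it is precisely the symmetric ordering that salvages second-order accuracy. I would therefore state clearly that the proof reduces to (i) the Lie-derivative computation showing single contact flows are conformal contactomorphisms, (ii) closure of contactomorphisms under composition, and (iii) the standard Baker--Campbell--Hausdorff cancellation for palindromic compositions, and refer to~\cite{Bravetti2020} for the contact-specific details and to the classical literature on splitting methods for (iii).
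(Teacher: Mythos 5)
Your proposal is correct and follows essentially the same route as the paper, which states this proposition without proof and defers to the construction in~\cite{Bravetti2020} and the BCH machinery sketched in its appendix: each factor is the exact flow of a contact Hamiltonian vector field, hence a (conformal) contactomorphism, compositions of contactomorphisms are contactomorphisms, and the palindromic Strang-type arrangement gives second-order accuracy by the standard BCH cancellation. Your Lie-derivative computation $\mathcal{L}_{X_h}\eta = -(\reeb h)\eta$ and the symmetric-composition argument are exactly the ingredients the cited reference uses, so nothing further is needed.
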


From knowledge of the second order contact integrator~\eqref{eq:secondorderintegrator} and using Yoshida's standard formulation for the composition~\cite{Yoshida}, 
we can construct two types of contact integrators of any even order;
the difference between the two methods is that one involves exact coefficients for the calculation of the new time step, while
the other uses approximated coefficients and involves a smaller number of map computations per iteration. The two methods are summarised in the following propositions.

\begin{Prop}[Integrator with exact coefficients]\label{prop:exact}
    If $S_{2n}(\tau)$ is an integrator of order $2n$, then the map
    \begin{equation}
        S_{2n+2} (\tau) = S_{2n}(z_1\tau) S_{2n}(z_0\tau) S_{2n}(z_1\tau), \label{2n+2orderexact}
    \end{equation}
    with $z_0$ and $z_1$ given by
    \begin{equation}%
	\label{z0z1}
        z_0(n)=-\frac{2^{\frac{1}{2n+1}}}{2-2^{\frac{1}{2n+1}}}, \hspace{0.5cm} z_1(n)=\frac{1}{2-2^{\frac{1}{2n+1}}};
    \end{equation}
    is an integrator of order $2n+2$.
\end{Prop}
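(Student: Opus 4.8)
The plan is to run the classical Yoshida triple--jump argument, checking that nothing in it uses symplecticity beyond the group structure of Hamiltonian flows, so that it transfers verbatim to the contact setting. First I would record what ``order $2n$'' means here: as a map germ at the identity, $S_{2n}(\tau) = e^{\tau \vH} + O(\tau^{2n+1})$, where $e^{\tau \vH}$ is the exact contact flow of $\cH$; and that $S_{2n}$ is \emph{time-symmetric}, $S_{2n}(-\tau)\circ S_{2n}(\tau) = \mathrm{id}$. The latter is inherited from the palindromic structure of $S_2$ in~\eqref{eq:secondorderintegrator} and is preserved by the symmetric composition in~\eqref{2n+2orderexact}, so by induction every $S_{2n}$ is symmetric. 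Writing $S_{2n}(\tau) = \exp\!\big(\widehat X(\tau)\big)$ for a formal (modified) vector field $\widehat X(\tau) = \tau\vH + \tau^{2n+1}Y + O(\tau^{2n+2})$ --- whose existence is the usual formal Baker--Campbell--Hausdorff expansion, with the Jacobi bracket~\eqref{eq:jacobibracketscoord} playing the role of the Lie bracket --- time-symmetry forces $\widehat X(-\tau) = -\widehat X(\tau)$, hence $\widehat X(\tau)$ contains only odd powers of $\tau$. In particular the first error term sits at order $\tau^{2n+1}$ with no $\tau^{2n+2}$ companion.

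Next I would compute the generator of the composed map $S_{2n}(z_1\tau)\,S_{2n}(z_0\tau)\,S_{2n}(z_1\tau)$. Again by BCH, its modified vector field is $(2z_1+z_0)\,\tau\vH + (2z_1^{2n+1}+z_0^{2n+1})\,\tau^{2n+1}Y + O(\tau^{2n+2})$, all bracket (cross) terms landing at order $\tau^{2n+2}$ or higher because $[\vH,\vH]=0$. Since this composed map is itself time-symmetric, its modified vector field also has only odd powers of $\tau$, so every $\tau^{2n+2}$ term vanishes automatically; therefore the composition has order at least $2n+2$ precisely when the two scalar conditions $2z_1+z_0 = 1$ (consistency: the leading term reproduces $\tau\vH$) and $2z_1^{2n+1}+z_0^{2n+1}=0$ (cancellation of the leading error $Y$) hold simultaneously.

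Finally I would solve this $2\times 2$ system. The second equation gives $z_0 = -2^{1/(2n+1)}z_1$; substituting into the first yields $z_1\big(2-2^{1/(2n+1)}\big) = 1$, that is $z_1 = \big(2-2^{1/(2n+1)}\big)^{-1}$ and $z_0 = -2^{1/(2n+1)}\big(2-2^{1/(2n+1)}\big)^{-1}$, which are exactly~\eqref{z0z1} and are well-defined real numbers since $2-2^{1/(2n+1)}>0$. Combined with the observation that a composition of contactomorphisms is a contactomorphism --- so $S_{2n+2}(\tau)$ is still a \emph{contact} integrator, by the Proposition applied to $S_2$ --- this proves the statement.

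I expect the only genuinely delicate point to be the justification of the modified-vector-field expansion and of the implication ``time-symmetric $\Rightarrow$ only even-order corrections'' in the contact category. The cleanest way around it is to emphasise that this part of the argument is purely algebraic: contact Hamiltonian flows form a (pseudo)group, their formal composition is governed by a BCH-type series in which the Jacobi bracket replaces the Lie bracket, and the parity argument is then identical to Yoshida's~\cite{Yoshida}. Preservation of the contact structure, by contrast, is immediate and requires no computation.
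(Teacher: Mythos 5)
Your argument is correct and is essentially the one the paper relies on: the paper gives no written proof but appeals to Yoshida's symmetric triple-jump composition, which carries over to the contact category precisely because the BCH corrections are again contact Hamiltonian vector fields (Jacobi bracket replacing the Lie bracket), as used in Section~\ref{subsec:modifiedH}, and your two scalar conditions $2z_1+z_0=1$, $2z_1^{2n+1}+z_0^{2n+1}=0$ reproduce exactly the coefficients in \eqref{z0z1}. Your remark that time-symmetry of $S_{2n}$ (inherited inductively from the palindromic $S_2$ and preserved by the symmetric composition) is needed to kill the even-order terms is the correct reading of the hypothesis and is the same implicit assumption made in the paper.
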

\begin{Prop}[Integrator with approximated coefficients]\label{prop:approx}
    There exist $m\in\mathbb{N}$ and a set of real coefficients $\{w_j\}^m_{j=0}$ such that the map
    \begin{equation}
        S^{(m)}(\tau)=S_2(w_m\tau) S_2(w_{m-1}\tau) \cdots S_2(w_{0}\tau) \cdots S_2(w_{m-1}\tau) S_2(w_m\tau), \label{msymm}
    \end{equation}
    is an integrator of order $2n$.
\end{Prop}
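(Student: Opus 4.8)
The plan is to analyse the composition $S^{(m)}(\tau)$ through its \emph{modified (shadow) vector field}, read off from the Baker--Campbell--Hausdorff (BCH) formula, and to show that imposing order $2n$ reduces to a finite system of polynomial equations in $w_0,\dots,w_m$ that becomes solvable once $m$ is large enough. The structural fact that makes this work is that the basic map $S_2(\tau)$ of~\eqref{eq:secondorderintegrator} is \emph{time-symmetric}, $S_2(-\tau)\circ S_2(\tau)=\mathrm{id}$, because it is a palindromic composition of the exact flows $e^{tX_{\ch_j}}$, each a genuine one-parameter group. First I would feed this palindromic product into the BCH formula to obtain a formal expansion $S_2(\tau)=\exp(\tau\,Y(\tau))$ with $Y(\tau)=X_{\cH}+\tau^{2}Y_{2}+\tau^{4}Y_{4}+\cdots$ containing only even powers of $\tau$, the odd ones being cancelled by symmetry; each $Y_{2k}$ is a universal Lie polynomial in the $X_{\ch_j}$. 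This is the contact counterpart of the classical statement for symmetric symplectic integrators.

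Next, since $S^{(m)}(\tau)=S_2(w_m\tau)\,S_2(w_{m-1}\tau)\cdots S_2(w_0\tau)\cdots S_2(w_{m-1}\tau)\,S_2(w_m\tau)$ is again a palindrome, it is time-symmetric, so BCH yields $S^{(m)}(\tau)=\exp(\tau\,Z(\tau))$ with $Z(\tau)=c_1\,X_{\cH}+\tau^{2}Z_{2}+\tau^{4}Z_{4}+\cdots$, once more containing only even powers; here $c_1=w_0+2\sum_{j=1}^{m}w_j$ and each $Z_{2k}$ is a fixed polynomial in $w_0,\dots,w_m$ whose coefficients are iterated Lie brackets of $X_{\cH},Y_2,\dots,Y_{2k}$. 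The composition is an integrator of order $2n$ precisely when $Z(\tau)=X_{\cH}+O(\tau^{2n})$, i.e.\ when
\begin{equation*}
w_0+2\sum_{j=1}^{m}w_j=1,\qquad Z_2=Z_4=\cdots=Z_{2n-2}=0 .
\end{equation*}
These are finitely many polynomial conditions (a consistency condition plus the order conditions) on the $m+1$ unknowns, the number of genuinely independent ones being bounded independently of $m$ while the number of free parameters grows with $m$.

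It then remains to exhibit one real solution. The cleanest way is to \emph{flatten} the exact triple-jump of Proposition~\ref{prop:exact}: if $S_{2k}$ is already a palindromic product of copies of $S_2$ with real arguments, then by~\eqref{2n+2orderexact} the map $S_{2k+2}(\tau)=S_{2k}(z_1\tau)\,S_{2k}(z_0\tau)\,S_{2k}(z_1\tau)$, once expanded, is again a palindromic product of copies of $S_2$ whose arguments are products of the previous ones with the real numbers $z_0(k),z_1(k)$ of~\eqref{z0z1}; starting from $S_2$ and iterating $n-1$ times yields exactly a map of the form~\eqref{msymm} with $m=(3^{n-1}-1)/2$ and explicit real coefficients, which is an integrator of order $2n$ by Proposition~\ref{prop:exact} applied $n-1$ times. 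This already proves the statement. I expect the real difficulty to lie not here but in the sharper content hinted at by the word ``approximated'': that one can take $m$ far smaller than $(3^{n-1}-1)/2$ by solving the reduced system $Z_{2k}=0$ directly. Bounding the number of independent conditions after exploiting the Jacobi identity and the time-symmetry, and then guaranteeing a \emph{real} (rather than complex) root of the resulting nonlinear system, is the delicate step; this is done numerically, following Yoshida~\cite{Yoshida}. The contact property, finally, comes for free: $S^{(m)}(\tau)$ is a composition of the contactomorphisms $S_2(w_j\tau)$, hence a contactomorphism.
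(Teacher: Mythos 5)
Your proposal is correct, and it is in essence the argument the paper itself relies on: the paper does not prove Propositions~\ref{prop:exact}--\ref{prop:approx} but quotes them from Yoshida's composition method~\cite{Yoshida} and its contact analogue in~\cite{Bravetti2020}, i.e.\ exactly the symmetric-BCH machinery you reconstruct (time-symmetry of the palindromic $S_2$, even-power modified vector field, consistency plus $Z_2=\cdots=Z_{2n-2}=0$ as the order conditions, contactomorphism property by composition). The one place where you go beyond the paper is the ``flattening'' of the triple jump of Proposition~\ref{prop:exact} via~\eqref{2n+2orderexact} and~\eqref{z0z1}, which gives a clean, self-contained proof of the bare existence statement with $m=(3^{n-1}-1)/2$; this is a legitimate and complete proof of the proposition as stated. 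You are also right that this does not by itself deliver the practical content of ``approximated coefficients'' -- compositions with much smaller $m$ (e.g.\ $m=3$ for order $6$, Table~\ref{numsol}) -- which are obtained by solving the finite polynomial system of order conditions numerically, as in~\cite{Yoshida}; your honest separation of the existence claim from this sharper, numerically established statement matches how the paper uses the result.
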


In Table~\ref{numsol} we list the values of the approximated coefficients $\{w_j\}^m_{j=0}$ for three different $6$th order integrators, labelled as A, B and C. 
Note that $w_0 := 1 - 2\sum_{j=1}^m w_i$.
\begin{table}[ht!]
    \centering
    \begin{tabular}{cccc}
        \toprule
        & A & B & C \\
        \midrule
        $w_0$ & $1.315186320683906$& $2.37635274430774$
        & $2.3894477832436816$
        \\
        \midrule
        $w_1$ & $-1.17767998417887$ & $-2.13228522200144$ & $0.00152886228424922$ \\
        $w_2$ & $0.235573213359357$ & $0.00426068187079180$ & $-2.14403531630539$  \\
        $w_3$ & $0.784513610477560$ & $1.43984816797678$ & $1.44778256239930$ \\
        \bottomrule
    \end{tabular}
        \caption{The coefficients $w_{i}$ for three different 6th order integrators.}%
	\label{numsol}
\end{table}

\begin{remark}\label{rmk:familyA}
	The splitting integrator with approximate coefficients labeled as A is the better performer among the approximate splitting integrators of 6th order presented here.
	This can be related to the fact that its largest coefficient is the smallest among the approximate integrators.
\end{remark}

\subsection{Modified Hamiltonian and error analysis}\label{subsec:modifiedH}

One of the main advantages of using contact splitting integrators is the possibility to have a direct error control by using the modified equations
obtained from the modified Hamiltonian that results from 
the Baker-Campbell-Hausdorff (BCH) formula
(see~\cite{Bravetti2020} for further details on the derivation of the modified Hamiltonian in the contact case).
Indeed, for an integrator of order $2n$ multiple applications of the BCH formula give~\cite{Bravetti2020}
\begin{equation}\label{eq:S2taubis}
S_{2n}(\tau) = \exp\left\{\tau X_\cH + \sum_{i=n}^{\infty} \tau^{2i+1} X_{2i+1} \right\},
\end{equation}
where all the corrections $X_{2i+1}$ are Hamiltonian vector fields.
Therefore $S_{2n}(\tau)$ is the time-$\tau$ flow of a Hamiltonian vector field, and its associated Hamiltonian, called \emph{the modified Hamiltonian}, 
can be written formally as the power series
\begin{equation}\label{eq:modH2}
\cH_{mod,2n}(q^a , p_a,s;\tau) = \cH (q^a , p_a,s) + \sum_{i=n}^{\infty} \tau^{2i} \Delta \cH_{2i} (q^a , p_a,s)\,,
\end{equation}
where the subscript $2n$ in $\cH_{mod,2n}$ denotes the fact that it is associated with an integrator of order $2n$, and
$\Delta \cH_{2i}$ are the Hamiltonian functions associated with the Hamiltonian vector fields $X_{2i+1}$, that is,
\begin{equation}
	-\Delta \cH_{2i} (q^a , p_a,s) = \iota_{X_{2i+1}} \eta.
\end{equation}

Plugging~\eqref{eq:modH2} into the contact Hamiltonian equations that stem from~\eqref{eq:hamvecfield}, we obtain \emph{the modified equations},
which are the equations whose time-$\tau$ flow gives exactly the integrator $S_{2n}(\tau)$. Therefore studying the modified equations and their relation 
with the original
equations gives us important information on the modifications introduced by the integrator on the original system.

\subsection{Geometric numerical integration of Li\'enard systems}
The application of the contact splitting integrators introduced in Section~\ref{sec:contactnumericalintegrators} 
to Li\'enard systems starts with the splitting of the contact Hamiltonian~\eqref{contactLi\'enard} as
	\begin{equation}
	\cH =  \underbrace{p s}_{C} + \underbrace{f(q) s}_{A} + \underbrace{g(q)}_{B}, \label{eq:hamsplitting}
	\end{equation}
and the consequent identification of the corresponding vector fields
\begin{align}
	X_A&= - \bigg(p f(q) + s f'(q)\bigg)\frac{\partial}{\partial p} -s f(q) \vecs,\label{eq:Li\'enardXA} \\ 
	X_B&= -g'(q) \frac{\partial}{\partial p} - g(q) \vecs,\label{eq:Li\'enardXB} \\
	X_C&= s \frac{\partial}{\partial q} - p^2 \frac{\partial}{\partial p}.\label{eq:Li\'enardXC}
\end{align}
The structure of this splitting ensures the exact integrability condition for any choice of the functions $f(q)$ and $g(q)$. Indeed, the time-$\tau$ flow maps are
explicitly given by
\begin{align}
    e^{\tau X_A} &\longrightarrow
	\begin{cases}
	q_{i+1} = q_i\\
	p_{i+1} = e^{-\tau f(q_i)} (p_i - f'(q_i) s_i \tau \epsilon )\\
	s_{i+1} = e^{-\tau f(q_i)} s_i
	\end{cases} \notag
	\\e^{\tau X_B} &\longrightarrow \begin{cases}
	q_{i+1} = q_i\\
	p_{i+1} = -g'(q_i)\tau + p_i\\
	s_{i+1} = -g(q_i) \tau + s_i
	\end{cases}
	 \label{eq:lienmaps}
	\\
	e^{\tau X_C} &\longrightarrow
	\begin{cases}
	q_{i+1} = q_i + s_i \tau\\
	p_{i+1} = \frac{p_i}{1+p_i \tau}\\
	s_{i+1} = s_i
	\end{cases} \notag
\end{align}

\begin{example}[The van der Pol oscillator yet again]
	\label{example:vdp}
	Applying the above splitting to the Hamiltonian~\eqref{eq:hamvanderpol} we obtain
	\begin{equation}
	\cH = \underbrace{p s}_C \underbrace{{-}\,\epsilon (1-q^2) s}_A \underbrace{{+}\,q}_B\,, \label{eq:vdphamsplittedparts}
	\end{equation}
	and the corresponding time-$\tau$ flow maps are
	\begin{align}
	e^{\tau X_A} &\longrightarrow
	\begin{cases}
	q_{i+1} = q_i\\
	p_{i+1} = e^{\left(q_i^2-1\right) \tau \epsilon } (p_i - 2 q_i s_i \tau \epsilon )\\
	s_{i+1} = e^{-\tau \epsilon (1 - q_i^2)} s_i
	\end{cases}\notag
	\\
	e^{\tau X_B} &\longrightarrow \begin{cases}
	q_{i+1} = q_i\\
	p_{i+1} = p_i-\tau\\
	s_{i+1} = s_i-\tau q_i
	\end{cases}  \label{eq:vdpmaps}
	\\
	e^{\tau X_C} &\longrightarrow
	\begin{cases}
	q_{i+1} = q_i + s_i \tau\\
	p_{i+1} = \frac{p_i}{1+p_i \tau}\\
	s_{i+1} = s_i
	\end{cases} \notag
	\end{align}
\end{example}

In the next section we present the numerical and analytical results
of the application of various splitting integrators based on the maps~\eqref{eq:vdpmaps} to the van der Pol oscillator.
To fix the notation, when referring to a particular splitting, 
we will write e.g.~$S_2(\tau)(CBABC)$ to indicate that we are using the 2nd order integrator obtained using the splitting~\eqref{eq:secondorderintegrator} of the maps~\eqref{eq:vdpmaps} composed in the order indicated in parentheses.

\section{Geometric numerical integration of the van der Pol oscillator: numerical vs analytical results}
\label{sec:numericalandanalytical}

\subsection{Numerical results}\label{sec:numericalresults}

We split the analysis into three different cases, labelled by the value of the nonlinear coupling parameter~$\epsilon$: for $\epsilon=0$ we recover the harmonic oscillator on the plane $(q,s)$;
for $\epsilon\ll1$ and  $\epsilon\sim1$ we are in the non-stiff regime; for $\epsilon\gg1$ we are in the stiff regime.

It is well-known that to approximate the limit cycle with Euler-type methods, one cannot choose the time step $\tau$ independently of $\epsilon$, even in the non-stiff case $\epsilon \ll 1$ \cite{ChRaSt2018}: for example the Euler method requires $\tau \ll \epsilon$ and the exponential midpoint method requires $\tau^3 \ll \epsilon$.

In the rest of this section we will focus on the performance of our algorithm in the preservation of the limit cycle.
As we will see, our methods accurately preserve the limit cycle of the van der Pol oscillator when $\tau \ll  1$: this allows for much larger step sizes than Euler-type methods when integrating Li\'enard systems. 

\subsubsection{$\epsilon = 0$ (harmonic oscillator)}\label{sec:numericalHO}
Figure~\ref{fig:orbitse=0} shows the solutions in the $(q,s)$-plane for different time steps $\tau$ and with the same initial condition $(q_0,p_0,s_0) = (2,0,0)$.
\begin{figure}[ht!]
	\centering
	\includegraphics[width=0.8\linewidth]{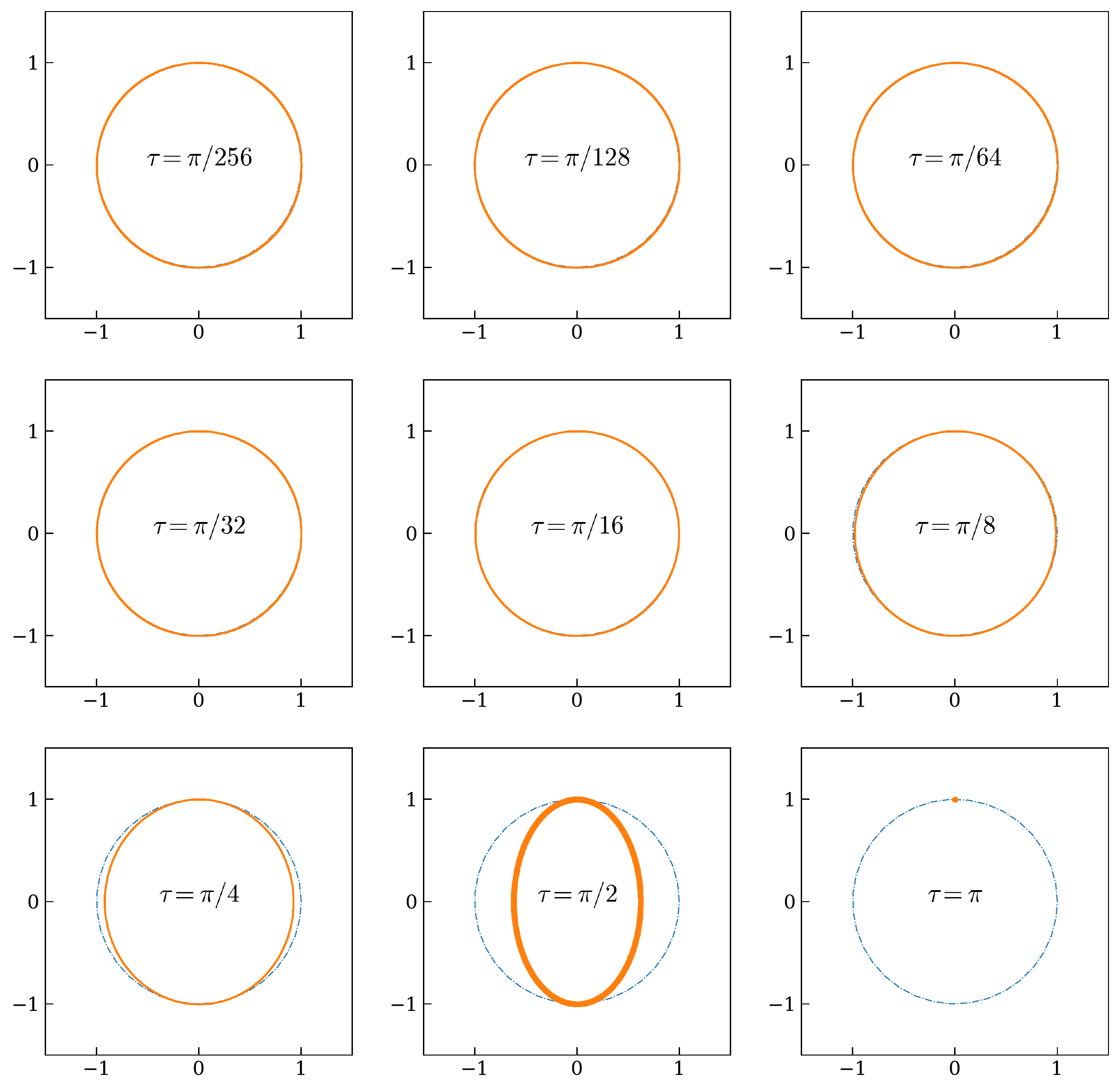}
	\caption{Orbit of the van der Pol oscillator with $\epsilon=0$ (harmonic oscillator) with initial condition $(q_0,p_0,s_0) = (0,0,1)$ integrated for different values of the time step $\tau$. The dashed blue line shows the exact solution.}
	\label{fig:orbitse=0}
\end{figure}
We can observe that the integrator is stable at least until
the surprisingly large value $\tau \sim \pi/2 > 1$.
By increasing the time step the typical circular orbit of the harmonic oscillator becomes more elliptic, and the period changes.
In Figure~\ref{fig:periodvstaue=0} we plot the relation between the time step and the period of the orbits obtained from numerical simulations.
Even though the frequency changes, we can see that the variation remains well under control for all values of $\tau\in(0,1]$.

\begin{figure}[ht!]
	\centering
	\includegraphics[width=0.8\linewidth]{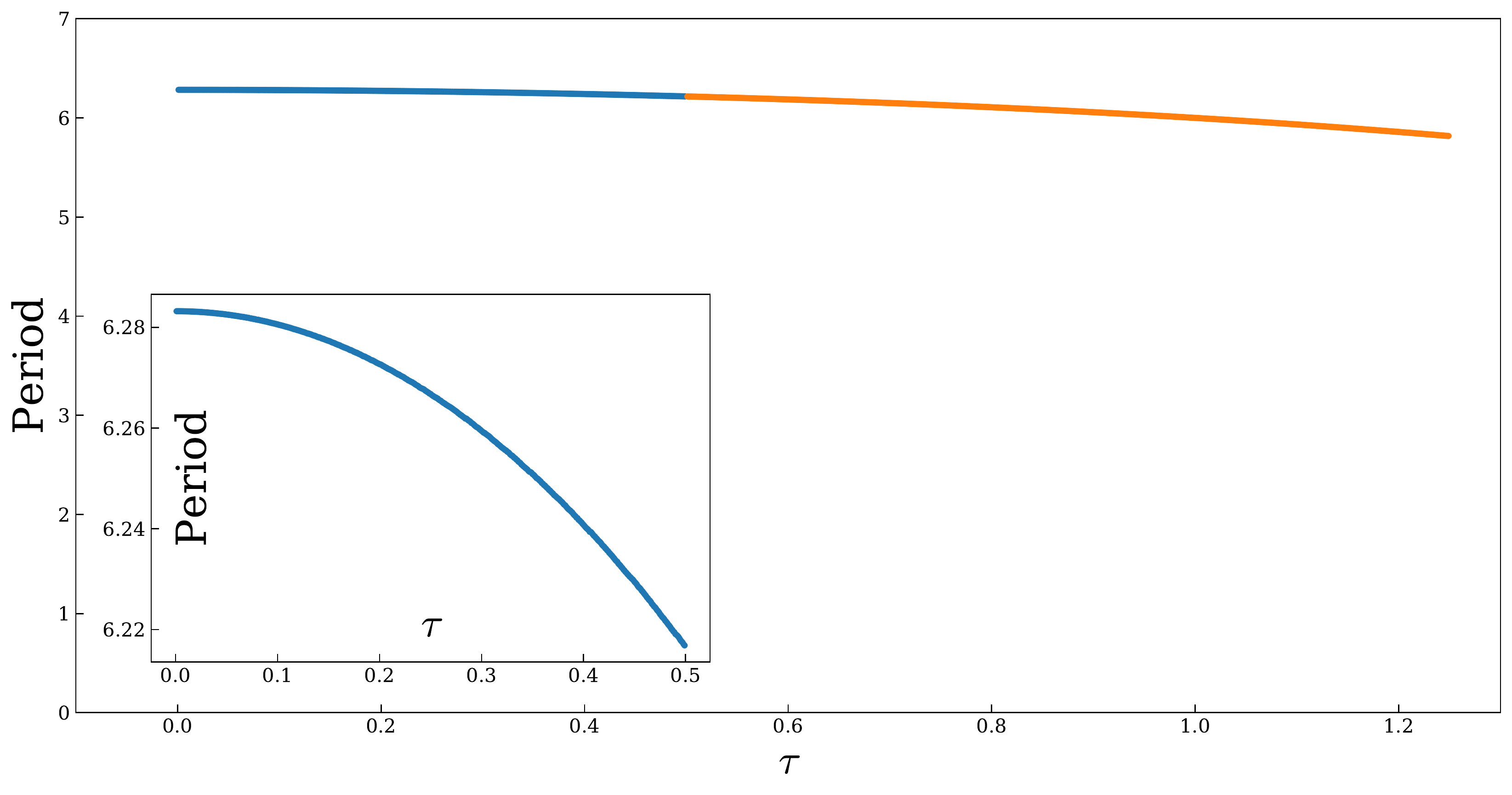}
	\caption{van der Pol oscillator with $\epsilon=0$ (harmonic oscillator). Dependence of the 
	period of the numerical solution with respect to the time step. The inset plot is a closeup of the periods for $\tau\in[0.001,0.5]$}
	\label{fig:periodvstaue=0}
\end{figure}

\subsubsection{$\epsilon\ll1$ and $\epsilon \sim 1$ (non-stiff regime)}
\label{subsubsec:nonstiffcasenumerical}
In Figure~\ref{fig:orbitsesim1} we show the persistence of the limit cycle for different values of $\epsilon\in\{0.1,0.5,2,4\}$ and $\tau\in [\pi/256,\pi/2]$.
\begin{figure}[ht!]
	\centering
	\includegraphics[width=0.8\linewidth]{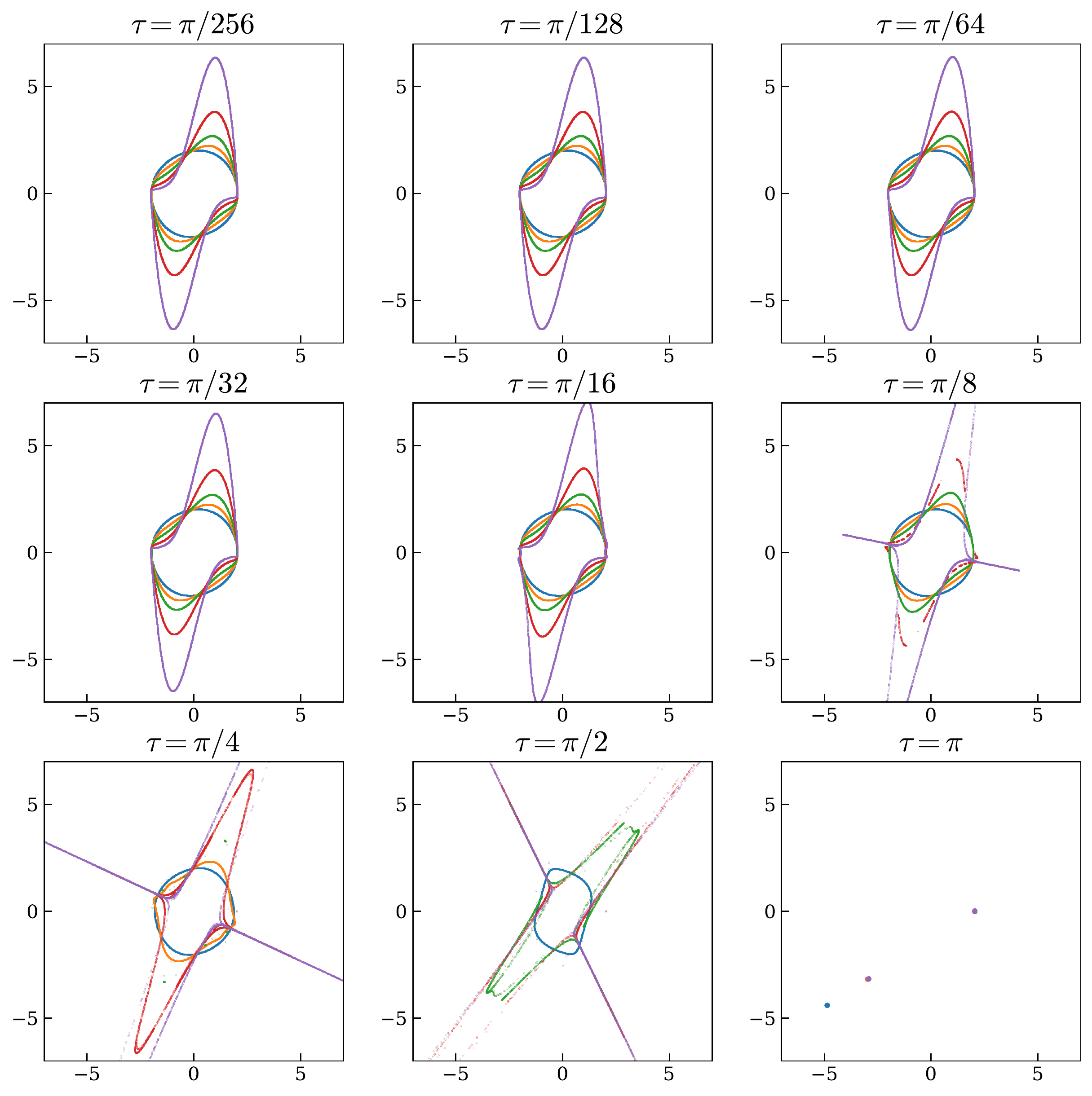}
	\caption{Limit cycle of the van der Pol oscillator for values of $\epsilon=$ 0.1 (blue), 0.5 (orange), 1 (green), 2 (red), 4 (purple) and with different time steps.}
	\label{fig:orbitsesim1}
\end{figure}
Clearly the limit cycle is preserved also for very large values of $\epsilon$ and $\tau$ in this range.
Moreover, the very long integration time, with $t \in [0,10000]$, is an evidence of the stability of the integrator.
Finally, the dependence of the period and the frequency of the limit cycle with respect to the time step shown in Fig.~\ref{fig:periodfreq} 
is very similar to that of the harmonic oscillator.
\begin{figure}[ht!]
	\centering
	\includegraphics[width=\linewidth]{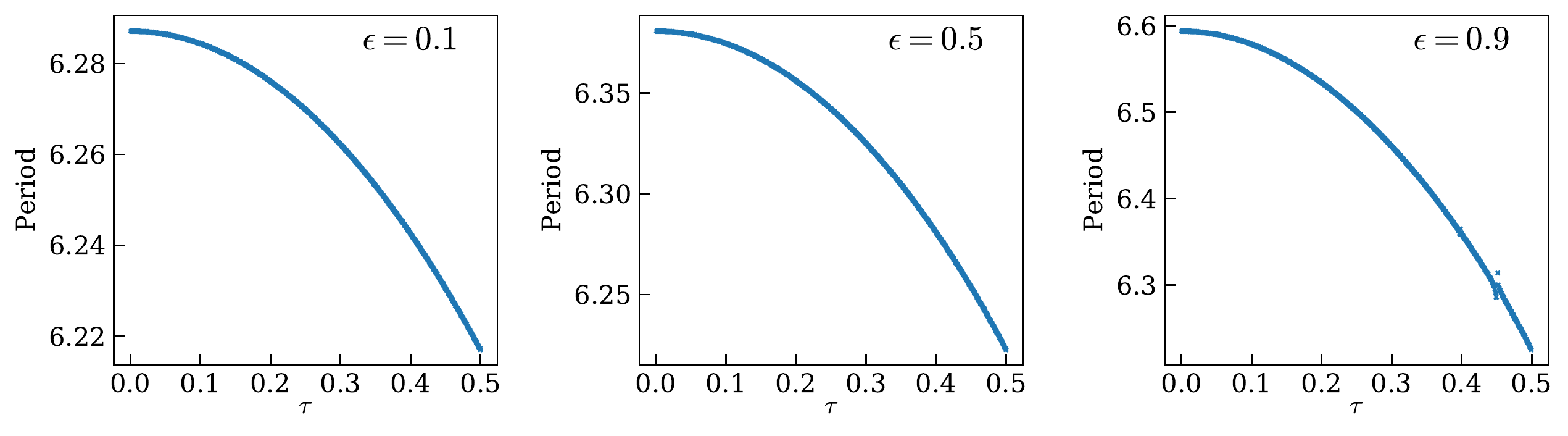}
	\caption{Dependence of the period of the numerical solution of the van der Pol limit cycle with respect to the time step for $\epsilon\in\{0.1, 0.5, 0.9\}$ increasing from left to right.}
	\label{fig:periodfreq}
\end{figure}

\subsubsection{$\epsilon\gg1$ (stiff regime)}
\label{subsubsec:stiffcasenumerical}

To better understand what happens in the stiff case $\epsilon\gg1$, it is convenient to perform, after the integration, the so-called \emph{Li\'enard transformation}~\cite{ChRaSt2018, lienard1928}
\begin{equation}
	\begin{cases}
	\underbar q = q \\
	\underbar s = q - \frac{q^3}{3} - \frac{s}{\epsilon}.
	\end{cases}\label{eq:Li\'enardtransform}
\end{equation}
This change of variables transforms the dynamics into
\begin{equation}
	\begin{cases}
	\dot {\underbar q} = \epsilon\left( \underbar q - \frac{{\underbar q}^3}{3} - \underbar s\right) \\
	\dot {\underbar s} = - \underbar q / \epsilon.
	\end{cases}
\end{equation}
and enables a nice geometric description of the limit cycle.
Indeed, the $\underbar q$ nullcline, which is the locus of points such that $\dot {\underbar q} = 0$, is given by the cubic $\underbar s = \underbar q - \frac{{\underbar q}^3}3$.
Since $\underbar q$ evolves much faster than $\underbar s$, the solutions are quickly attracted by the cubic nullcline.
Once there, they move slowly along the curve until they reach an extremum, at which point they quickly jump horizontally to the other branch of the nullcline. This periodic motion that jumps back and forth on the nullcline is the attractive limit cycle of the stiff van der Pol oscillator.

\begin{figure}[ht!]
	\centering
	\includegraphics[width=0.8\linewidth]{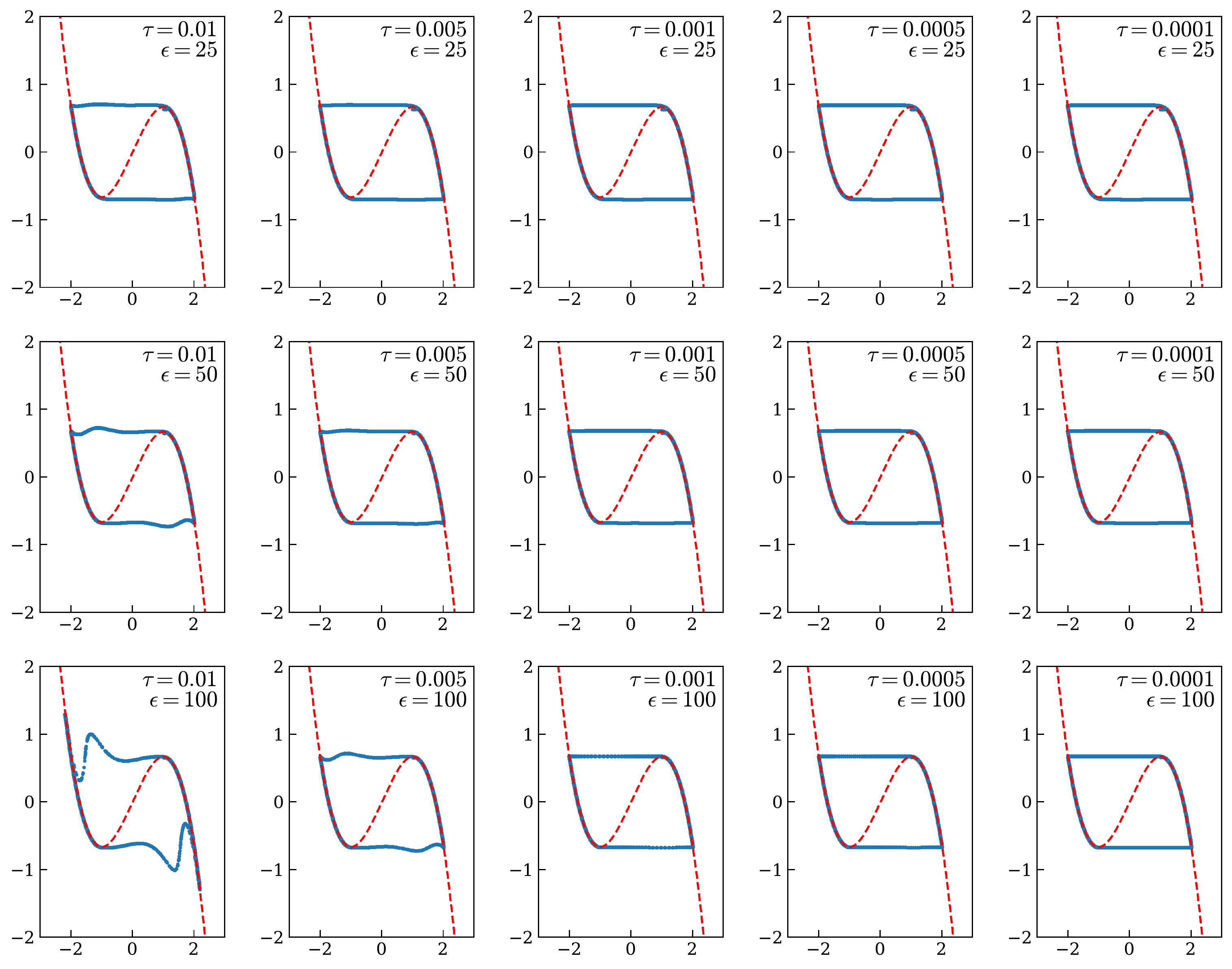}
	\caption{Orbits for the stiff van der Pol oscillator for different values of the coupling $\epsilon$ and of the time step~$\tau$ after the Li\'enard transformation: with $\epsilon\in\{25, 50, 100\}$ increasing from top to bottom  and ${\tau\in\{10^{-2}, 5\times10^{-3}, 10^{-3}, 5\times10^{-4}, 10^{-4}\}}$ decreasing from left to right.}
	\label{fig:bige}
\end{figure}

Figure~\ref{fig:bige} shows the cubic nullcline and the numerically simulated attractor for $\epsilon\in\{25,50,100\}$ and for different values of the time step.
As one can observe, the limit cycle is preserved also for large values of the nonlinear coupling, although it suffers from a distortion for larger values of $\tau$: this is especially clear in the first picture of the last row of plots of Figure~\ref{fig:bige}, corresponding to $\epsilon=100$ and $\tau=0.01$.

\subsection{Analytical results}
\label{sec:analiticalstudy}
In this section we provide an analytical study of the contact splitting integrators for the van der Pol oscillator based on the modified equations.
We start by providing two general properties of the modified equations that are of special importance.
 
As we have seen in Example~\ref{ex:contactformulationvanderpol}, in the contact formulation of the van der Pol oscillator the equations 
for ${q}$ and ${s}$ are independent of $p$, as it should be. 
Clearly, given that the maps for $q$ and $s$ in~\eqref{eq:vdpmaps} are all independent of $p$, any splitting integrator will satisfy this property too. However, it is instructive to recover this result by using the modified Hamiltonian, since in the proof we will find out an important property of $\cH_{mod}$, i.e.~that it is linear in $p$, as it is the original Hamiltonian~\eqref{eq:hamvanderpol}.
This is the content of the next result.

\begin{Prop}
	\label{prop:singular}
	For any contact splitting integrator,
	the corresponding modified Hamiltonian  $\cH_{mod}$ is linear in $p$. 
	It follows that the modified equations for $q$ and $s$ are independent of $p$.
\end{Prop}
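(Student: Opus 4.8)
The plan is to show that linearity in $p$ is preserved under every operation used to build a splitting integrator, and then to read off the consequence for the $q$ and $s$ equations. First I would observe that the three pieces of the splitting~\eqref{eq:hamsplitting}, namely $C = ps$, $A = f(q)s$ and $B = g(q)$, are each \emph{affine} in $p$: $A$ and $B$ are independent of $p$, and $C$ is homogeneous linear in $p$. The key structural fact I would isolate is that the space of smooth functions on $M$ that are affine in $p$ is closed under the Jacobi bracket. This can be checked directly from the coordinate formula~\eqref{eq:jacobibracketscoord}: if $g$ and $f$ are each of the form $\alpha(q,s) + \beta(q,s)\,p$, then inspecting the three groups of terms in~\eqref{eq:jacobibracketscoord} shows that every term in $\{g,f\}_\eta$ is again at most linear in $p$ — the first parenthesis is affine in $p$, the middle group carries one explicit factor $p_\mu$ multiplying $\partial/\partial p_\mu$ derivatives (which kill the $p$-dependence), and the last group likewise pairs a $\partial/\partial p_\mu$ with a non-$p$-derivative. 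So affine-in-$p$ functions form a Lie subalgebra under $\{\cdot,\cdot\}_\eta$.

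Next I would connect this to the BCH expansion~\eqref{eq:S2taubis}. The modified Hamiltonian $\cH_{mod,2n}$ and all its corrections $\Delta\cH_{2i}$ are obtained, order by order, as iterated Jacobi brackets of $A$, $B$ and $C$ (this is the content of the contact BCH formula used to derive~\eqref{eq:modH2}, as in~\cite{Bravetti2020}). Since $A, B, C$ lie in the affine-in-$p$ subalgebra, and that subalgebra is closed under the bracket, every iterated bracket — hence every $\Delta\cH_{2i}$, hence $\cH_{mod}$ itself — is affine in $p$. In fact one gets the slightly stronger statement in the proposition: because the original Hamiltonian~\eqref{eq:hamvanderpol} (or~\eqref{contactLi\'enard}) is already affine in $p$ and the corrections are too, $\cH_{mod}$ is affine in $p$; writing "linear" here matches the paper's usage.

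Finally, I would plug $\cH = \cH_{mod} = \alpha(q,s) + \beta(q,s)\,p$ into the Darboux-coordinate expression~\eqref{eq:hamvecfield} for the contact Hamiltonian vector field. The $\dot q$ component is $\partial\cH_{mod}/\partial p = \beta(q,s)$ and the $\dot s$ component is $p\,\partial\cH_{mod}/\partial p - \cH_{mod} = p\beta - (\alpha + \beta p) = -\alpha(q,s)$; both are manifestly independent of $p$, which is exactly the claim about the modified equations for $q$ and $s$. The main obstacle, such as it is, is being careful that the contact BCH machinery genuinely expresses the modified Hamiltonian through Jacobi brackets of the summands (rather than, say, through some operation that could break the affine structure) and that the flows $e^{\tau X_{h_j}}$ used in the splitting really are generated by these affine Hamiltonians; once that is granted — and it is, by the cited construction in~\cite{Bravetti2020} and by the explicit form of $A,B,C$ — the rest is the short subalgebra-closure check and the one-line substitution into~\eqref{eq:hamvecfield}.
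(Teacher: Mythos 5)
Your overall strategy is the same as the paper's: show that the class of functions (at most) linear in $p$ is closed under the Jacobi bracket, conclude via the BCH expansion that every correction $\Delta\cH_{2i}$, hence $\cH_{mod}$, is linear in $p$, and then read off from~\eqref{eq:hamvecfield} that $\dot q=\partial_p\cH_{mod}$ and $\dot s=p\,\partial_p\cH_{mod}-\cH_{mod}$ do not involve $p$. That skeleton is fine, and your final substitution step is exactly the paper's ``direct look'' at~\eqref{eq:hamvecfield}.

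However, your justification of the key closure lemma is not correct as stated. You claim that each of the three groups in~\eqref{eq:jacobibracketscoord} is separately at most linear in $p$; this is false, and the failure occurs precisely for brackets that the BCH expansion forces you to take. For the van der Pol splitting one has $\{B,C\}_\eta=qp+s$, and the $\tau^2$ correction contains $\{C,\{C,B\}_\eta\}_\eta$, i.e.\ a bracket of $g=ps$ with a function containing the term $f=qp$. For this pair the first group of~\eqref{eq:jacobibracketscoord} gives $g\,\partial_s f-\partial_s g\,f=-qp^2$ and the middle group gives $p\left(\partial_s g\,\partial_p f-\partial_p g\,\partial_s f\right)=+qp^2$: each group is genuinely quadratic in $p$ (the explicit factor $p$ in the middle group multiplies $\partial_s g$, which is itself linear in $p$ when $g=ps$), and only their \emph{sum} is affine. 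So the correct statement is that for any two functions $\alpha_1+\beta_1 p$, $\alpha_2+\beta_2 p$ the $p^2$ contributions of the first two groups cancel identically, $\bigl(\beta_1\partial_s\beta_2-\beta_2\partial_s\beta_1\bigr)+\bigl(\beta_2\partial_s\beta_1-\beta_1\partial_s\beta_2\bigr)=0$; equivalently, in the monomial formula~\eqref{eq:monomial} the coefficient $(1-j)\bar r+(\bar j-1)r$ of the would-be $p^{j+\bar j}$ term vanishes when $j=\bar j=1$, which is how the paper argues. With that cancellation supplied, your lemma and hence your proof go through; the paper additionally notes the minor point that the coefficient of $p$ in $\cH_{mod}$ cannot vanish identically (otherwise the modified equations would give $\dot q=0$), which sharpens ``at most linear'' to ``linear''.
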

\begin{proof}
We prove first the second part: the claim is that if $\cH_{mod}$ is linear in $p$, then the corresponding modified equations for $q$ and $s$ do
not depend on $p$. By a direct look at the general contact Hamiltonian equations~\eqref{eq:hamvecfield}, this is clearly true.
Now let us prove that $\cH_{mod}$ is indeed linear in $p$: considering the splitting in~\eqref{eq:vdphamsplittedparts}, we have that $A= - \epsilon \left( 1-q^2\right) s$, $B= q$, and $C= ps$, are all polynomials in $q,p,s$ and that only $C$ depends (linearly) on $p$. 
Therefore, we see from~\eqref{eq:monomial} that by commuting $A$, $B$ and $C$ we can only obtain terms that are at most linear $p$.
Then again, by commuting two terms that are at most linear in $p$, we see from~\eqref{eq:monomial} that we always obtain terms that are at most linear in $p$. 
We conclude that the modified Hamiltonian is at most linear in $p$.  
We conclude that $\cH_{mod}$ is indeed linear, because otherwise in the modified equations we would have $\dot q=0$, which is clearly not the case.
\end{proof}

Furthermore, we observe that when the time step $\tau \neq 0$  any truncation of the modified equations is likely to possess new spurious equilibria. 
This is so since at any order the corresponding vector fields are polynomials in $q,p,s$ of increasing order. 
Therefore it is important to actually prove that $(q,s)=(0,0)$ is the only fixed point (considering only the dynamics projected to the $(q,s)$ plane) for the integrator and that it is unstable, as we show in the next result.

\begin{Prop}\label{prop:originequilibrium}
	Restricted to the plane $(q,s)$, the integrator $S_2(\tau)(CBABC)$ has a unique fixed point at $(0,0)$ which is unstable.
	Furthermore, both the eigenvalues $\lambda_{1,2}$ of the Jacobian of the mapping $(q_i, s_i) \mapsto (q_{i+1}, s_{i+1})$ satisfy $|\lambda_{1,2}| > 1$ for all $\tau > 0$ and $\epsilon > 0$.
\end{Prop}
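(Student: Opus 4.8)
The plan is to carry out the whole analysis in the $(q,s)$ plane. This is legitimate because the $q$- and $s$-components of each of the three elementary flow maps~\eqref{eq:lienmaps} are independent of $p$, so $S_2(\tau)(CBABC)$ restricts to a well-defined planar map $\Phi_\tau\colon (q_i,s_i)\mapsto (q_{i+1},s_{i+1})$. Specialising to $f(q)=-\epsilon(1-q^2)$, $g(q)=q$ and setting $\sigma=\tau/2$, this restriction is the composition $\Phi_\tau = C_\sigma\circ B_\sigma\circ A_\tau\circ B_\sigma\circ C_\sigma$, where $C_\sigma(q,s)=(q+\sigma s,s)$, $B_\sigma(q,s)=(q,s-\sigma q)$ and $A_\tau(q,s)=(q,\,e^{\epsilon(1-q^2)\tau}s)$. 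Each factor fixes the origin, so $(0,0)$ is a fixed point of $\Phi_\tau$.

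For uniqueness I would use that $q$ is left untouched by $B_\sigma$ and $A_\tau$, hence is altered only by the two $C_\sigma$ factors: the first sends $q_0\mapsto q_0+\sigma s_0$, a value that persists through $B_\sigma,A_\tau,B_\sigma$ until the final $C_\sigma$, which adds $\sigma$ times the current $s$-coordinate. Imposing that the final state equals $(q_0,s_0)$ forces $q_0+2\sigma s_0=q_0$, hence $s_0=0$ since $\tau>0$. Substituting $s_0=0$ and propagating $s$ through $B_\sigma,A_\tau,B_\sigma$ produces a return value $-\sigma q_0\bigl(e^{\epsilon(1-q_0^2)\tau}+1\bigr)$, which vanishes only for $q_0=0$ because the second factor is strictly positive. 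Thus $(0,0)$ is the unique fixed point; this part is short and elementary.

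Next I would linearise $\Phi_\tau$ at $(0,0)$. Since every intermediate image of the origin is again the origin, the chain rule gives the Jacobian $J:=D\Phi_\tau(0,0)$ as the ordered product of the five linearisations. Four of these are the constant shears $\left(\begin{smallmatrix}1&\sigma\\0&1\end{smallmatrix}\right)$ and $\left(\begin{smallmatrix}1&0\\-\sigma&1\end{smallmatrix}\right)$, and $DA_\tau(0,0)=\operatorname{diag}(1,e^{\epsilon\tau})$ (its $\partial s'/\partial q$ entry carries a factor $s$ and so vanishes at the origin). Multiplying these out yields $\det J = e^{\epsilon\tau}$ and $\tr J = (1+e^{\epsilon\tau})\bigl(1-\tfrac{\tau^2}{2}\bigr)$. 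Because $\det J=e^{\epsilon\tau}>1$ for all $\tau,\epsilon>0$, the product $\lambda_1\lambda_2$ has modulus larger than $1$, so the two eigenvalues cannot both lie in the closed unit disk; in particular $(0,0)$ is unstable. For the sharper claim $|\lambda_1|,|\lambda_2|>1$, I would invoke the elementary criterion that both roots of $\lambda^2-(\tr J)\lambda+\det J$ lie strictly outside the unit circle exactly when $\det J>1$ and $|\tr J|<1+\det J$ (equivalently, the reciprocal polynomial $(\det J)\lambda^2-(\tr J)\lambda+1$ is Schur stable). The first condition is the determinant bound just obtained, and the second, after dividing by $1+e^{\epsilon\tau}>0$, reduces to the purely real inequality $\bigl|1-\tfrac{\tau^2}{2}\bigr|<1$.

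The routine parts are the explicit form of $\Phi_\tau$, the fixed-point argument, and the $2\times 2$ matrix product. The one step requiring genuine care is the eigenvalue bound: one must verify the trace inequality $\bigl|1-\tfrac{\tau^2}{2}\bigr|<1$ on the range of time steps under consideration and, in parallel, treat the case of complex-conjugate eigenvalues $(\tr J)^2<4\det J$, for which $|\lambda_1|=|\lambda_2|=\sqrt{\det J}=e^{\epsilon\tau/2}>1$ automatically.
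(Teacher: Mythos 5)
Your construction is correct and follows the same overall skeleton as the paper's proof (explicit fixed-point computation, Jacobian at the origin, eigenvalue estimate), but the two technical ingredients differ in a useful way. For the Jacobian you multiply the linearisations of the five factors at the fixed origin instead of differentiating the composed map \eqref{eq:integratormap}; both routes give $\det J=e^{\epsilon\tau}$ and $\tr J=(1+e^{\epsilon\tau})\left(1-\tfrac{\tau^2}{2}\right)$, and your sign convention $s\mapsto e^{\epsilon(1-q^2)\tau}s$ for the $A$-flow is the one the paper's own proof uses (the exponent displayed in \eqref{eq:vdpmaps} has a sign typo relative to \eqref{eq:lienmaps}). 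For the spectrum you invoke the Schur/Jury criterion ($\det J>1$ and $|\tr J|<1+\det J$) rather than the paper's explicit case analysis with $\lambda_{1,2}=\tfrac14\left[\alpha\pm\sqrt{\beta}\right]$; this is cleaner and, importantly, it is sharp.

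The one step you leave open is precisely where the statement itself overreaches, so be aware that you cannot close it as stated. Your criterion reduces to $\left|1-\tfrac{\tau^2}{2}\right|<1$, i.e.\ $\tau<2$, and you only promise to ``verify it on the range of time steps under consideration''; as written this proves the claim for $\tau\in(0,2)$, not ``for all $\tau>0$''. In fact no argument can do better: at $\tau=2$ one has $\tr J=-(1+e^{2\epsilon})$ and the eigenvalues are $-1$ and $-e^{2\epsilon}$, and for $\tau>2$ the characteristic polynomial satisfies $p(-1)=1+\tr J+\det J<0$ while $p(0)=\det J>0$, so one eigenvalue lies strictly inside the unit circle. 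The paper's proof misses this because in its Case II it rewrites $\beta\geq 0$ as $\alpha\geq 4e^{\epsilon\tau/2}$, silently assuming $\alpha>0$ (i.e.\ $\tau<\sqrt{2}$); when $\alpha\leq-4e^{\epsilon\tau/2}$ the subsequent chain of inequalities does not apply. So your argument is complete and correct once you state the restriction $0<\tau<2$ explicitly (which covers every time step used in the paper, and on $\sqrt{2}<\tau<2$ it even covers a region the paper's case analysis does not); without that restriction there is a genuine gap, but it is one inherited from the proposition rather than from your method.
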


\begin{proof}
The proof is based on writing explicitly the action of the integrator on an initial condition, that is, 
	we apply $e^{\tau/2 X_C}e^{\tau/2 X_B}e^{\tau X_A}e^{\tau/2 X_B}e^{\tau/2 X_C}$ to $(q_{i}, s_{i})$, to obtain
	\begin{align}
	\begin{cases}
	q_{i+1} = q_i + \frac{\tau}{2}s_{i}+\frac{\tau}{2}s_{i+1}\\
	s_{i+1} = e^{\epsilon\tau(1-(q_i +\frac{\tau}{2} s_i)^{2})}\left[s_{i}-\frac{\tau}{2}\left(q_{i}+s_{i}\frac{\tau}{2}\right)\right]-\frac{\tau}{2}\left(q_{i}+s_{i}\frac{\tau}{2}\right)\,.
	\end{cases} \label{eq:integratormap}
	\end{align}
Now when we impose the condition for a fixed point
	\begin{align}
	\begin{cases}
	q_{i+1} = q_i \\
	s_{i+1} = s_{i}\,.
	\end{cases} \label{eq:integratormap2}
	\end{align}
using the second equation in~\eqref{eq:integratormap2} into the first equation in~\eqref{eq:integratormap} we obtain
	$$
	q_{i+1}=q_{i}+\tau s_{i}=q_{i}\,,
	$$
which is true if and only if $s_{i}=0$.

Next, we substitute $s_{i}=0=s_{i+1}$ into the second equation in~\eqref{eq:integratormap} and we obtain
	$$
	0=s_{i+1}
	=-\frac{\tau}{2}q_{i}\left[e^{\epsilon\tau(1-(q_i )^{2})}+1\right]\,,
	$$
which is true if and only if 
$q_{i}=0$.

To prove that $(0,0)$ is unstable, we compute the
Jacobian of the map~\eqref{eq:integratormap} 
at $(0,0)$, and in particular we obtain that its determinant is $e^{\tau\epsilon}>1$, indicating that at least one eigenvalue has absolute value $>1$, which proves the instability.

To conclude the proof, let $\epsilon > 0$ and $\tau > 0$.
A direct computation shows that the eigenvalues of the Jacobian of the map~\eqref{eq:integratormap} at $(0,0)$ are
\begin{equation}
	\lambda_{1,2} = \frac14\left[\alpha \pm \sqrt{\beta}\right],
\qquad
	\alpha := (2-\tau^2) (e^{\epsilon \tau} + 1),
\qquad
	\beta := \alpha^2 - 16 e^{\epsilon \tau}.
\end{equation}
Depending on the sign of $\beta$ we have two cases: the eigenvalues are both real or they are complex conjugates.
\begin{description}
	\item[Case I) $\lambda_{1,2} \in \mathbb{C}$:]
	the eigenvalues are complex conjugates, therefore $|\lambda_{1}| = |\lambda_2|$.
	Since $\det J = \lambda_1\lambda_2 = e^{\epsilon \tau}$, we have $|\lambda_{1}| = |\lambda_2| = e^{\frac{\epsilon \tau}2} > 1$.
	\item[Case II) $\lambda_{1,2} \in \mathbb{R}$:]
	this happens when $\beta \geq 0$, that is
	\begin{equation}\label{eq:conditioncase2}
		\alpha \geq 4 e^{\frac{\epsilon \tau}2}.
	\end{equation}
	The fact that $\lambda_1 > 1$ follows from
	$
		\lambda_1 = \frac14\left[\alpha + \sqrt{\beta}\right]
		\geq \frac14 \alpha 
		\overset{\eqref{eq:conditioncase2}}{\geq}
		    e^{\frac{\epsilon \tau}2} > 1
	$.\\
	Let us now focus on $\lambda_2$. Notice that since $\lambda_1\lambda_2>0$ and $\lambda_1>0$, we necessarily have that $\lambda_2>0$. Therefore, it suffices to prove that
	\begin{equation}\label{eq:lambda2}
		\lambda_2 = \frac14\left[\alpha - \sqrt{\beta}\right] > 1.
	\end{equation}
	By repeatedly rearranging \eqref{eq:lambda2} and observing that \eqref{eq:conditioncase2} implies $\alpha-4>0$, we obtain that \eqref{eq:lambda2} is equivalent to the following inequalities
	\begin{align}
		\alpha-\sqrt{\beta}    &> 4\\
		(\alpha-4)^2       &> \beta \\
		\alpha^2 - 8\alpha + 16 &> \alpha^2 - 16 e^{\epsilon \tau}\\
		\left[16 - 8 (2-\tau^2)\right] (e^{\epsilon \tau} + 1) &> 0.\label{eq:final}
	\end{align}
	Since $(2-\tau^2) < 2$, \eqref{eq:final} is always true, proving \eqref{eq:lambda2}.
\end{description}
\end{proof}

In what follows we split the analysis into three different cases depending on the value of $\epsilon$, as we did in the Section~\ref{sec:numericalresults}.

\subsubsection{$\epsilon = 0$ (harmonic oscillator)}
\label{subsec:exactHarmonic}
In this case we have a harmonic oscillator, for which each nontrivial trajectory has period $T=2\pi$.
Moreover, the maps \eqref{eq:vdpmaps} in this particular case are simplified (for instance, the map $e^{\tau X_A}$ becomes the identity)
and the modified Hamiltonian takes the remarkably simple expression
	\begin{equation}
	    \cH_{mod,2} = p s F(\tau) + q \,G(\tau) \label{eq:modhame=0}
	\end{equation}
where 
	\begin{align}
	F(\tau) &= 1-\frac{\tau ^2}{12}-\frac{\tau ^4}{120}-\frac{\tau ^6}{840}-\frac{\tau ^8}{5040}+\cO(\tau^{10}),\\
	G(\tau) &= 1+\frac{\tau ^2}{6}+\frac{\tau ^4}{30}+\frac{\tau ^6}{140}+\frac{\tau ^8}{630}+\cO(\tau^{10}).
	\end{align}
The corresponding modified system is thus
	\begin{equation}
	    \begin{cases}
        	\dot{q}(t) = s F(\tau) \\
        	\dot{s}(t) = -q G(\tau)\\
	\dot{p}(t) = -p^2 F(\tau )-G(\tau) 
	    \end{cases}
	\end{equation}
which is again exactly solvable (recall that $\tau$ is fixed), and the solution in $q$ and $s$ is a harmonic oscillator with frequency
	\begin{align}
	\omega(\tau) &= \sqrt{F(\tau) G(\tau)}
	=1 +\frac{\tau ^2}{24} + \frac{3 \tau ^4}{640} +\frac{5 \tau ^6}{7168}+ \frac{35 \tau ^8}{294912}+ \cO(\tau^{10})\,.
	\label{eq:frequency}
	\end{align}
In Fig.~\ref{fig:periodvstaue=0exact} we compare~\eqref{eq:frequency}
with the numerical results for the period and the frequency obtained in Section~\ref{sec:numericalHO}.
We observe that there is a very good agreement between the analytical expression up to the 8th order in $\tau$ and the numerical results.
	\begin{figure}[ht!]
		\centering
		\includegraphics[width=0.8\linewidth]{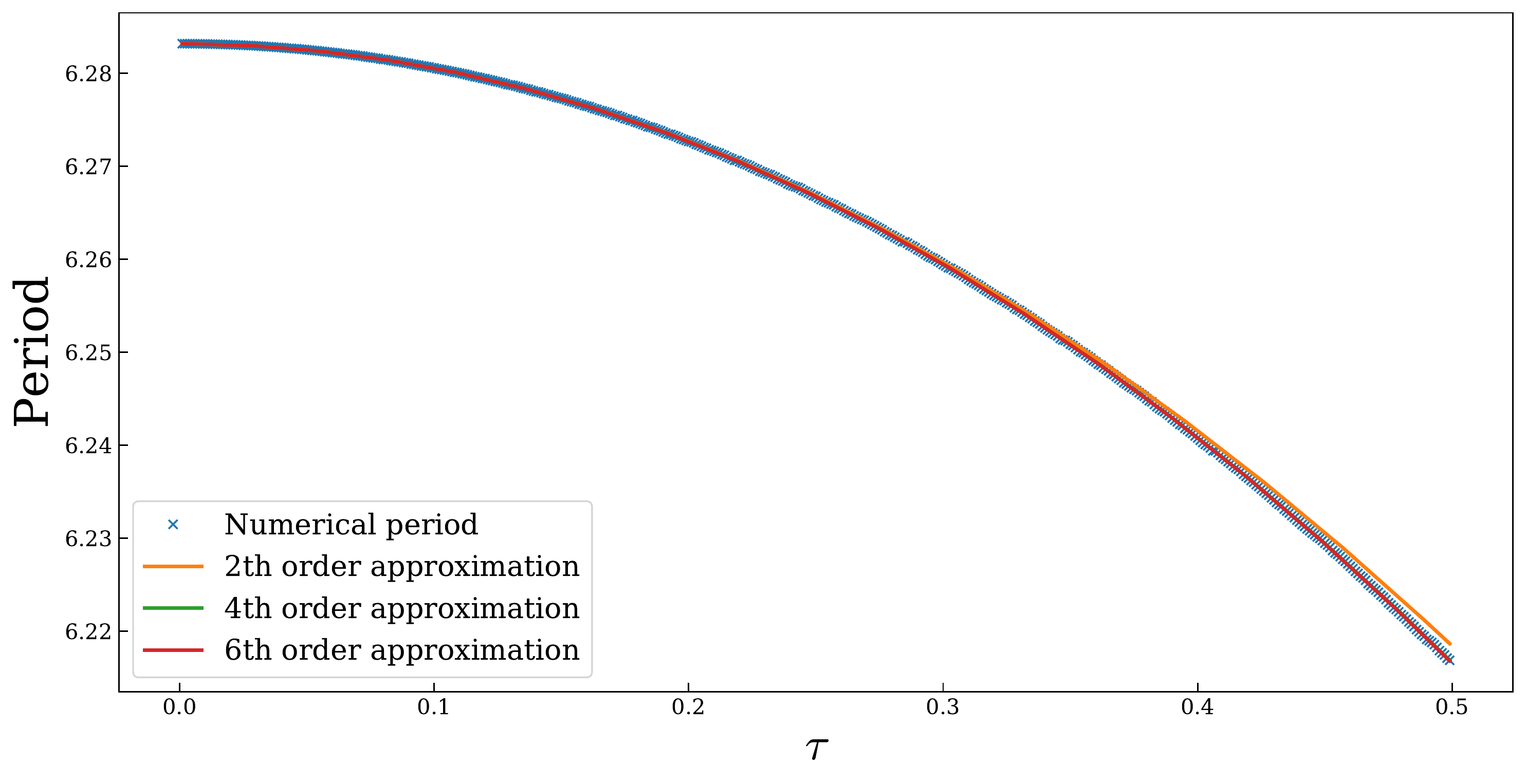}
		\caption{Dependence of the period of the numerical solution for the harmonic oscillator ($\epsilon=0$) with respect to the time step. The numerically estimated period is compared with the period computed from the modified equations.}
		\label{fig:periodvstaue=0exact}
	\end{figure}

\subsubsection{$\epsilon\ll1$ (non-stiff regime)}\label{sec:nonstiffregime2}
This regime can be studied using perturbation theory and therefore there are many results (see e.g.~\cite{Amore2018,Andersen1982}).
We study 
the persistence of the limit cycle for the contact splitting integrators in a way similar to~\cite{ChRaSt2018}, 
that means, we use the modified equations in order to provide some estimations on the amplitude and period of the limit cycle.
\begin{Prop}
	For any contact splitting integrator of order $2n$ based on the maps~\eqref{eq:vdpmaps},
	the projection of the numerical solutions of the van der Pol system~\eqref{eq:contactdynsysvanderpol} onto the $(q,s)$-plane have a 
	limit cycle at the approximate radius $r=2+\cO(\tau^{2n})$. 
	Moreover, the approximate radius of the $S_{2}(\tau)(CBABC)$ integrator, up to order 4  in $\tau$, is
	\begin{equation}
	    r=2-\frac{\tau ^2}{4}+\cO\left(\tau ^4\right).
	\end{equation}
	\label{prop:ray}
\end{Prop}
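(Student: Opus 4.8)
The plan is to analyze the limit cycle of the modified equations for the $(q,s)$ subsystem, which by Proposition~\ref{prop:singular} are autonomous and independent of $p$. First I would recall that for the van der Pol splitting~\eqref{eq:vdphamsplittedparts}, the only $p$-dependent piece is $C=ps$, so the modified Hamiltonian has the form $\cH_{mod,2n} = p\,\Phi(q,s;\tau) + \Psi(q,s;\tau)$ with $\Phi = s + \cO(\tau^{2n})$ and $\Psi = -\epsilon(1-q^2)s + q + \cO(\tau^{2n})$ (the leading terms being exactly $\cH$). From~\eqref{eq:hamvecfield} the modified equations in $(q,s)$ are $\dot q = \Phi$, $\dot s = p\,\partial_s\Phi + p\,\partial_s\Psi$... — more carefully, since $\cH_{mod}$ is linear in $p$, the $\dot s$-equation reads $\dot s = p\,\partial_p\cH_{mod} - \cH_{mod} = p\Phi - (p\Phi + \Psi) = -\Psi$, which is indeed $p$-independent, consistent with Proposition~\ref{prop:singular}. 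So the reduced planar system is $\dot q = \Phi(q,s;\tau)$, $\dot s = -\Psi(q,s;\tau)$, a regular $\cO(\tau^{2n})$-perturbation of the exact van der Pol system~\eqref{eq:vdp2}.

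Next I would invoke the standard perturbation-theory fact (e.g.\ via the Poincar\'e--Lindstedt method or averaging, as in~\cite{ChRaSt2018}) that the exact van der Pol system for small $\epsilon$ has a hyperbolic limit cycle which to leading order is the circle of radius $2$; by the implicit function theorem / persistence of hyperbolic limit cycles under $C^1$-small perturbations, the $\cO(\tau^{2n})$-modified system also has a unique limit cycle whose radius differs from $2$ by $\cO(\tau^{2n})$. This gives the first assertion $r = 2 + \cO(\tau^{2n})$ for any integrator of order $2n$. For the sharper statement about $S_2(\tau)(CBABC)$ I would need the explicit second-order modified Hamiltonian: one computes $\Delta\cH_2$ from the BCH formula applied to the symmetric composition $e^{\tau/2 X_C}e^{\tau/2 X_B}e^{\tau X_A}e^{\tau/2 X_B}e^{\tau/2 X_C}$, using the monomial Jacobi-bracket formula~\eqref{eq:monomial} to evaluate the relevant double brackets $\{C,\{C,A\}\}$, $\{C,\{C,B\}\}$, $\{A,\{A,C\}\}$, etc. Then one applies averaging/Lindstedt at order $\epsilon$ to the reduced planar modified system to extract the $\cO(\tau^2)$ correction to the radius.

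Concretely, for the radius I would pass to polar-like coordinates adapted to the harmonic part — but note the $\epsilon=0$ modified system~\eqref{eq:modhame=0} is an \emph{anisotropic} oscillator with $\dot q = sF(\tau)$, $\dot s = -qG(\tau)$, so I would first rescale (e.g.\ $\tilde s = \sqrt{F/G}\,s$ or use action--angle variables for the quadratic part) to make the unperturbed flow a genuine rotation with frequency $\omega(\tau) = \sqrt{FG}$ from~\eqref{eq:frequency}, and then treat the $\epsilon$-terms (including their $\tau$-corrections coming from $\Delta\cH_2$) as the perturbation. Averaging the radial equation over one period and setting the average to zero yields an algebraic equation for the limit-cycle amplitude; expanding in $\tau$ to order $\tau^2$ should produce $r = 2 - \tau^2/4 + \cO(\tau^4)$. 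I would then remark that comparison with the numerics of Section~\ref{subsubsec:nonstiffcasenumerical} confirms this.

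The main obstacle I anticipate is bookkeeping: correctly assembling the order-$\tau^2$ term $\Delta\cH_2$ via the contact BCH formula (the Jacobi bracket is not a derivation, so the contact BCH differs from the symplectic one and the nested-bracket combinatorics from~\cite{Bravetti2020} must be used carefully), and then propagating that correction through the averaging computation without dropping cross-terms between the $\tau$-correction of the quadratic part and the leading $\epsilon$-perturbation. Both sub-steps are "routine but delicate" expansions; the conceptual content — reduction to a planar $\cO(\tau^{2n})$-perturbation of van der Pol plus persistence of the hyperbolic limit cycle — is straightforward.
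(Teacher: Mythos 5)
Your overall strategy is the paper's: use Proposition~\ref{prop:singular} to reduce to the $p$-independent planar modified system (your computation $\dot q=\Phi$, $\dot s=-\Psi$ is exactly this reduction), then get the radius from the averaged radial equation, with the explicit $\tau^2$ term of $S_2(\tau)(CBABC)$ assembled via the contact BCH formula and \eqref{eq:monomial}. You deviate in two places. For the general claim, the paper does not invoke persistence of the hyperbolic cycle: it writes $\dot r$ in plain polar coordinates on the $(q,s)$-plane and observes that $\frac{1}{2\pi}\int_0^{2\pi}\dot r\,d\theta=-\frac18 r(r^2-4)\epsilon+\cO(\tau^{2n})$, so the nontrivial stationary radius of the averaged dynamics is $2+\cO(\tau^{2n})$. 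Your persistence/implicit-function argument is legitimate for fixed $\epsilon>0$, but the cycle's hyperbolicity degenerates as $\epsilon\to 0$ (Floquet exponent of size $\epsilon$), so the implied constant in your $\cO(\tau^{2n})$ grows like $1/\epsilon$; the averaging route yields the statement directly in the first-order-in-$\epsilon$ sense in which ``approximate radius'' is meant, which is what the proposition intends.

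The one step that could actually fail to reproduce the stated coefficient is your proposed rescaling $\tilde s=\sqrt{F/G}\,s$ before averaging. Since $\sqrt{F/G}=1-\tau^2/8+\cO(\tau^4)$, amplitudes measured in the rescaled (isotropic) frame differ from the polar radius in the original $(q,s)$-plane by $\cO(\tau^2)$ at amplitude $2$ --- exactly the size of the correction $-\tau^2/4$ you are trying to compute. The paper's value is, by definition of its procedure, the fixed point of the uniformly-$\theta$-averaged $\dot r$ in the \emph{original} polar coordinates, namely $\frac{1}{2\pi}\int_0^{2\pi}\dot r\,d\theta=-\frac{1}{32}r\epsilon\left(r^2(\tau^2+4)-16\right)+\cO(\tau^4)$, whose zero gives $r=2-\tau^2/4+\cO(\tau^4)$. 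If you average after rescaling you must convert the resulting amplitude back to the $(q,s)$ polar radius, otherwise you will land on a different $\cO(\tau^2)$ coefficient. Apart from this bookkeeping hazard (and the fact that you assert rather than carry out the $\tau^2$ computation), the plan is sound and matches the paper's proof.
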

\begin{proof}

Let us consider a contact splitting integrator $S_{2n}(\tau)$ of order $2n$; using the BCH formula (see Section~\ref{subsec:modifiedH}) we can argue that the modified Hamiltonian whose time-$\tau$ flow is given by $S_{2n}(\tau)$ is of the form
	\begin{align}
	    \cH_{mod,2n} =  p s -&\epsilon\left(1-q^2\right) s +q 
	     +\tau ^{2n} \Delta \cH_{2n}(q,p,s) +  \cO(\tau^{2n+2}).
	    \label{eq:modvdp3}
	\end{align}
Thus the modified equations read
	\begin{equation}
	    \begin{cases}
	    \dot{q} = s +  \tau^{2n} \frac{\partial \Delta \cH_{2n}}{\partial p} + \cO(\tau^{2n+2})
	    \\
	    \dot{s} = - q -\epsilon(1-q^2) s +  \tau^{2n} \left( -\frac{\partial \Delta \cH_{2n}}{\partial q} - p \frac{\partial \Delta \cH_{2n}}{\partial s} \right) + \cO(\tau^{2n+2})
	    \label{eq:moddynsys}\\ 
	    \dot{p} = -1-p^2+\epsilon\left[(1-q^2)p -2qs\right] +  \tau^{2n} \left( p \frac{\partial \Delta \cH_{2n}}{\partial p} - \Delta \cH_{2n} \right) + \cO(\tau^{2n+2})
	    \end{cases}.
\end{equation}
We know form Proposition \ref{prop:singular} that
the equations for $\dot q$ and $\dot s$ are independent of $p$, and
from Proposition \ref{prop:originequilibrium} that the point $(0,0)$ in the $(q,s)$-plane is an unstable equilibrium of the system. 

If we rewrite the system in polar coordinates on the plane $(q,s)$
with the change of variables $q=r\cos\theta$ and $s=r\sin\theta$,
then the equation for $\dot r$ reads
\begin{equation}
	\dot{r}= \epsilon\, r \sin ^2(\theta ) \left(1-r^2 \cos ^2(\theta )\right) + \tau^{2n} \mathcal{R}_{2n}(r,\theta) + \cO(\tau^{2n+2}) 
	\,,
\end{equation}
Since the modified Hamiltonian is by construction a polynomial in the variables $q$ and $s$, the dependence on $\theta$ of $\mathcal{R}_{2n}$ is only through sums and products of trigonometric functions.
In particular, this implies that the averaged dynamics of $\dot{r}$ obtained by the integration along a period has the form
\begin{equation}
	\frac{1}{2 \pi}  \int_0^{2\pi} \dot{r} \ d\theta   = -\frac{1}{8} r \left(r^2-4\right) \epsilon + \cO(\tau^{2n}).
\end{equation}
One now observes that, modulo high order terms in $\tau$, the stationary points of the averaged dynamics are $r=0$ and $r=2$, which implies that the latter is the radius of the limit cycle, proving the first part of the theorem.

For any fixed order, it is possible to give a more refined estimate of the limit cycle radius by looking at the exact correction from the modified Hamiltonian.

To prove the second part of the statement, we concentrate on the integrator $S_{2}(\tau)(CBABC)$ (since this is the integrator that will be used throughout the simulations in the paper).
The corresponding modified Hamiltonian, in this case, is
\begin{align}
   \cH_{mod,2} =& p s + \epsilon \left(q^2-1\right) s +q  \notag \\
    &+\frac{\tau ^2}{12} \bigg(\big(q^2-1\big) \epsilon ^2 \big(q \big(p q s+q^2+4 s^2-1\big)-p s\big) \notag \\
   &\qquad\quad+\epsilon  \big(p q \big(q^2-2 s^2-1\big)-s \big(-7 q^2+s^2+1\big)\big)-p s+2 q\bigg) + \cO(\tau^3),
\end{align}
leading to the following modified equations for $\dot q$ and $\dot s$
	\begin{equation}
	    \begin{cases}
	    \dot{q} = s+\frac{\tau^2}{12}\left[q \epsilon  \left(q^2-2 s^2-1\right)+\left(q^2-1\right)^2 s \epsilon ^2-s\right]
	    \\
	    \dot{s} = - q -\epsilon(1-q^2) s +\frac{\tau^{2}}{12} \left[-q \left(q^2-1\right) \epsilon ^2 \left(q^2+4 s^2-1\right)+s \epsilon  \left(-7 q^2+s^2+1\right)-2 q \right]
	    \\ 
	    \end{cases},
\end{equation}
and to the radial equation
\begin{align}
	\dot{r} = & \,\epsilon\,r  \sin ^2(\theta ) \left(1-r^2 \cos ^2(\theta )\right) \notag \\
	&+ \frac{\tau ^2}{12} r \bigg(-3 \sin (\theta ) \cos (\theta )-4 r^2 \epsilon ^2 \sin ^3(\theta ) \cos (\theta ) \left(r^2 \cos ^2(\theta )-1\right) \notag \\
	&\qquad\quad+ r^2 \epsilon  \sin ^4(\theta )+\epsilon  \cos ^2(\theta ) \left(r^2 \cos ^2(\theta )-1\right)+\epsilon  \sin ^2(\theta ) \left(1-9 r^2 \cos ^2(\theta )\right)\bigg)
	\,\notag.
\end{align}
An explicit computation then gives
\begin{equation}
	\frac{1}{2 \pi}  \int_0^{2\pi} \dot{r} \ d\theta  =
	-\frac{1}{32} r \epsilon  \left(r^2 \left(\tau ^2+4\right)-16\right) + \cO(\tau^4),
\end{equation}
leading to the claimed radius $r=2-\frac{\tau ^2}{4}+\cO\left(\tau ^4\right)$.
\end{proof}

In the non-stiff regime, we can also perform a perturbative analysis by applying the Poincar\'e-Lindstedt method to study the frequency (and hence the period) of the system (see e.g.~\cite{Andersen1982}).
The first step consists in the time reparametrisation $t'=\omega\,t$,
which leads to the differential equation
\begin{equation}
    \begin{cases}
    \omega q' = X_{\cH_{mod}} q \\
    \omega s' = X_{\cH_{mod}} s\,,
    \end{cases}.
\end{equation}
where the derivatives are now expressed in terms of $t'$, instead of $t$, and, as usual, we omit the decoupled equation for $\dot p$.
Noticing that the modified Hamiltonian vector field depends on the two parameters $\epsilon$ and $\tau$, 
we suppose, in analogy to the traditional approach~\cite{Andersen1982}, 
that all the terms appearing in the equations can be expanded in Taylor series
with respect to such parameters as follows
	\begin{align}
    \omega(\epsilon,\tau) =& \sum_{i=j=0}^{+\infty} \omega_{i,j} \ \epsilon^i \ \tau^{2j}, \label{eq:pertfreq} \\
    q(t,\epsilon,\tau)=& \sum_{i=j=0}^{+\infty} q_{i,j}(t) \ \epsilon^i \ \tau^{2j}, \\
    s(t,\epsilon,\tau)=& \sum_{i=j=0}^{+\infty} s_{i,j}(t) \ \epsilon^i \ \tau^{2j}.
	\end{align}
In particular, notice that we assume all
the expressions to be of even order in $\tau$, given that all the terms appearing in the modified equations are of even order.

For convenience, and without loss of generality, we follow \cite{Andersen1982} and assume that
	\begin{equation}
	\begin{cases}
    q'(0,\epsilon,\tau) = 0,\\
    q(0,\epsilon,\tau) > 0.
	\end{cases}
	\label{eq:phasecondition}
	\end{equation}
This is equivalent to a convenient time shift that simplifies the initial conditions.

The differential equation corresponding to the order $\epsilon^0$, $\tau^0$ then reads
	\begin{equation}
	\begin{cases}
	\omega_{0,0} \ q_{0,0}'(t') = s_{0,0}(t') \\
	\omega_{0,0} \ s_{0,0}'(t') = - q_{0,0}(t')
	\end{cases}
	\end{equation}
whose solution is
\begin{equation}
    \begin{cases}
    q_{0,0}(t)= A \cos \left(\frac{t'}{\omega_{0,0}}\right) + B \sin \left(\frac{t'}{\omega_{0,0}}\right), \\
    s_{0,0}(t)= -A \sin \left(\frac{t'}{\omega_{0,0}}\right) + B \cos \left(\frac{t'}{\omega_{0,0}}\right).
    \end{cases}
\end{equation}
Since we want $q_{0,0}(t)$ and $s_{0,0}(t)$ to have period $2\pi$, this fixes $\omega_{0,0}=1$,
while condition~\eqref{eq:phasecondition} implies $A>0$ and $B=0$.

To fix $A$, we need to consider the order $\epsilon^1$, $\tau^0$, which gives the differential equations
	\begin{equation}
    \begin{cases}
    \omega_{1,0} q_{0,0}'(t') + q_{1,0}'(t')= s_{1,0}(t'), \\
    \omega_{1,0} s_{0,0}'(t')+s_{1,0}'(t') = - q_{1,0}(t') + (1-q^2_{0,0}(t'))s_{0,0}(t')\,.
    \end{cases}\label{eq:10}
	\end{equation}
Inserting the solution of the previous step we can solve~\eqref{eq:10}.
We find that in order to avoid secular behaviours, we need to fix $\omega_{1,0}=0$ and $A=2$.

By repeating this procedure for higher orders of $\epsilon$ and $\tau$, we can compute the matrix $\omega_{i,j}$ and the corresponding solutions.
For instance, up to order $\epsilon^5$ and $\tau^6$, we get
	\begin{equation}
	\label{eq:pertcoeff}
	\omega_{i,j}=\begin{pmatrix}
	 1 & \frac{1}{24} & \frac{3}{640} & \frac{5}{7168} \\
	 0 & 0 & 0 & 0 \\
	 -\frac{1}{16} & \frac{27}{128} & \frac{149}{2048} & \frac{559}{16384} \\
	 0 & 0 & 0 & 0 \\
	 \frac{17}{3072} & \frac{781}{73728} & -\frac{339041}{3538944} & \frac{4695149}{84934656} \\
	 0 & 0 & 0 & 0
	\end{pmatrix}.
	\end{equation}
The first important remark here is that the coefficients of the first row (corresponding to fixing $i=0$ and taking $j=0,1,2,3$ in equation~\eqref{eq:pertfreq}) 
are exactly the same as for the approximation of the frequency obtained by using the modified Hamiltonian (cf.~equation~\eqref{eq:frequency}),
which shows a remarkable consistency between the two methods.
Moreover, equation~\eqref{eq:pertfreq}, with the coefficients $\omega_{i,j}$ given in~\eqref{eq:pertcoeff}, allows us to extend the analytical analysis for the frequency
and period of the limit cycle to the case $\epsilon\neq 0$. 
In Figure~\ref{fig:periodfreq01} we compare the analytical results thus obtained with the numerical results from Section~\ref{subsubsec:nonstiffcasenumerical}. Clearly the match is very accurate,
as the curves are almost indistinguishable, even for very large values of the nonlinear coupling $\epsilon$ and of the time step $\tau$.
\begin{figure}[ht!]
	\centering
	\includegraphics[width=0.8\linewidth]{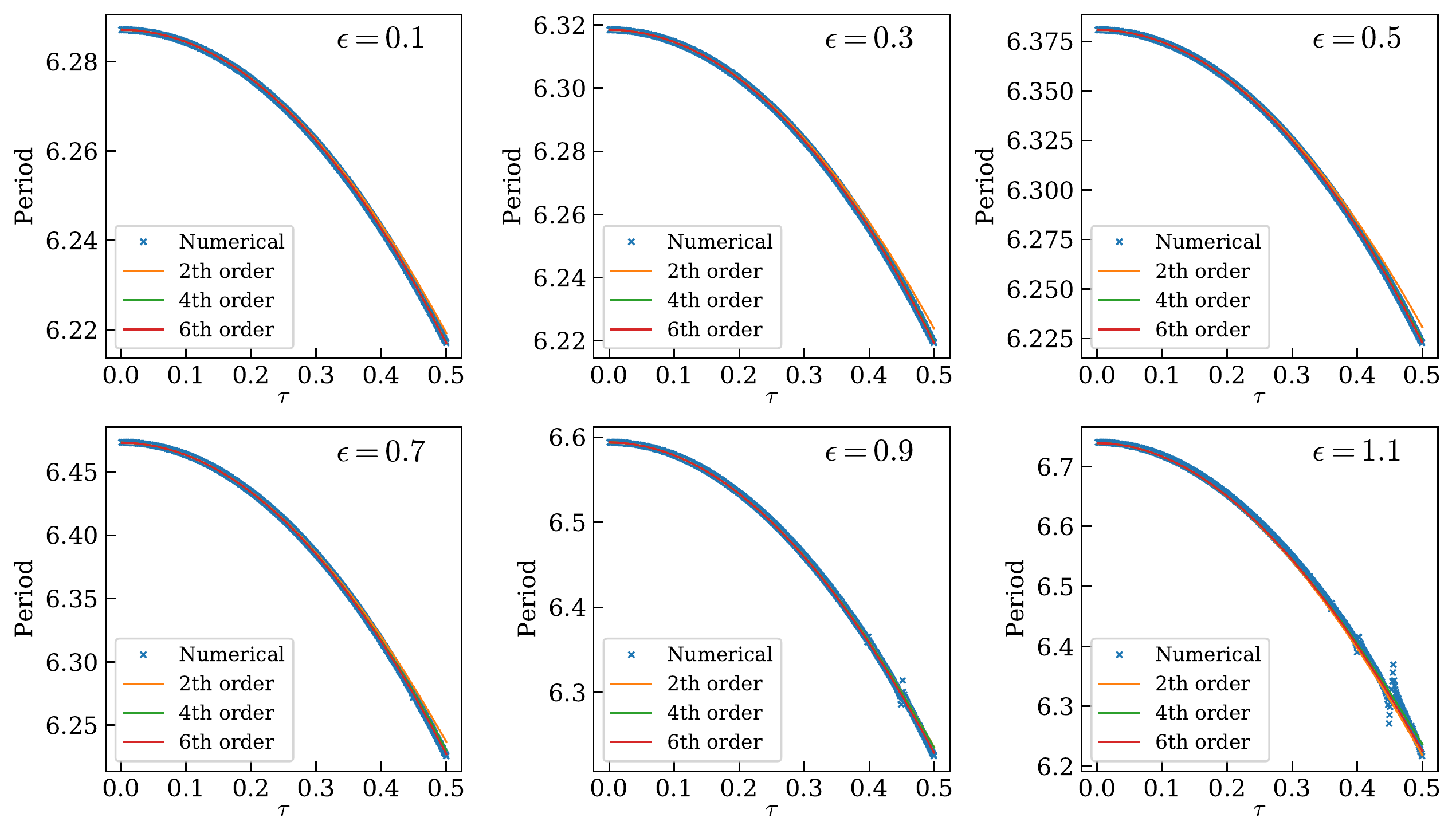}
	\caption{Comparison between the numerical and analytical results (using perturbation theory) for the period of the limit cycle. Each figure is an analogue of Figure~\ref{fig:periodvstaue=0exact} 
	for the value of $\epsilon$ indicated on the top right corner.}
	\label{fig:periodfreq01}
\end{figure}

\subsubsection{$\epsilon \gg 1$ (stiff regime)}

This is allegedly the most difficult regime to study, because $\epsilon$ is large and therefore the nonlinear terms are important.
Typically we must rely on the numerical results.
However, we can give an argument for a reasonable measure of the distance between the simulated numerical dynamics and the original one:
from a direct inspection of the modified Hamiltonian (see e.g.~\eqref{eq:modvdp3}), one can see directly that for any truncation up to 
order $2i$ in $\tau$, we get a polynomial of the same order in $\epsilon$. We formalise this observation in the following result.
\begin{Prop}\label{prop:order}
For any contact splitting integrator of order $2n$ based on the maps~\eqref{eq:vdpmaps},
the truncation at order $2i$ (in $\tau$) of the modified Hamiltonian is a polynomial of degree $2i$ in $\epsilon$.
\end{Prop}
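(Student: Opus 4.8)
The plan is to control the $\epsilon$-degree of the modified Hamiltonian through the Jacobi-bracket form of the Baker--Campbell--Hausdorff (BCH) formula, exploiting the fact that in the splitting~\eqref{eq:vdphamsplittedparts} the parameter $\epsilon$ enters only through the single block $A$, and there only linearly.

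First I would recall, following Section~\ref{subsec:modifiedH} and~\cite{Bravetti2020}, that for any contact splitting integrator of order $2n$ the modified Hamiltonian $\cH_{mod,2n}(\tau)=\cH+\sum_{k\ge n}\tau^{2k}\Delta\cH_{2k}$ is generated by iterating the BCH formula applied to the pieces $A=-\epsilon(1-q^2)s$, $B=q$, $C=ps$, with the commutator replaced by the Jacobi bracket $\{\cdot,\cdot\}_\eta$. Since $S_{2n}(\tau)$ is a finite composition of exact flows $e^{c\,\tau X_h}$ with $h\in\{A,B,C\}$, each factor contributing exactly one power of $\tau$, the coefficient of $\tau^{2k+1}$ in the resulting Jacobi--BCH series is precisely $\Delta\cH_{2k}$, and it is a finite linear combination of $2k$-fold nested Jacobi brackets built from $2k+1$ letters taken, with repetition, from $\{A,B,C\}$. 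That only odd powers of $\tau$ occur, so that the corrections are indexed by $2k$, is the usual consequence of the time-symmetry of $S_{2n}$ already used in~\eqref{eq:S2taubis}.

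Next comes the $\epsilon$-bookkeeping. Of the three letters only $A=\epsilon\,(q^2-1)s$ depends on $\epsilon$, and linearly so; $B$ and $C$ are $\epsilon$-free. The Jacobi bracket~\eqref{eq:jacobibracketscoord} differentiates only with respect to $q,p,s$, never with respect to $\epsilon$, so it is $\epsilon$-bilinear; consequently any nested bracket word in $A,B,C$ containing the letter $A$ exactly $r$ times is homogeneous of degree $r$ in $\epsilon$. Hence each term of $\Delta\cH_{2k}$ has $\epsilon$-degree equal to its number of $A$-letters, which is \emph{a priori} at most its length $2k+1$. The one non-trivial point is that this bound is never attained: $\{\cdot,\cdot\}_\eta$ is a genuine Lie bracket, so the subalgebra it generates from the single element $A$ is spanned by $A$ alone and every nested bracket of length $\ge 2$ all of whose entries equal $A$ vanishes. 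Therefore a nonzero bracket word of length $2k+1$ contains at most $2k$ copies of $A$, and $\deg_\epsilon\Delta\cH_{2k}\le 2k$.

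Putting this together, the truncation at order $2i$ of $\cH_{mod,2n}$ is $\cH+\sum_{k=n}^{i}\tau^{2k}\Delta\cH_{2k}$: the term $\cH$ has $\epsilon$-degree $1$ and the $k$-th summand has $\epsilon$-degree at most $2k\le 2i$, so the truncation is a polynomial in $\epsilon$ of degree at most $2i$. That the degree is exactly $2i$ (for $i\ge n$) follows by isolating the coefficient of $\epsilon^{2i}$, which equals $\tau^{2i}$ times the $\epsilon^{2i}$-homogeneous part of $\Delta\cH_{2i}$, all lower corrections contributing $\epsilon$-degree strictly less than $2i$; after factoring out $\epsilon^{2i}$ this part is, by the Jacobi identity, a combination of $\operatorname{ad}_{(q^2-1)s}^{2i}(q)$ and $\operatorname{ad}_{(q^2-1)s}^{2i}(ps)$, which does not vanish, as one sees already for $\Delta\cH_2$ in the proof of Proposition~\ref{prop:ray}. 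The only genuinely delicate step is the vanishing of pure-$A$ bracket words, which is what improves the crude count $2k+1$ to $2k$; everything else is routine degree bookkeeping on the BCH series.
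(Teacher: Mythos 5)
Your argument is essentially the paper's own proof: the corrections $\Delta\cH_{2i}$ are nested Jacobi brackets of $2i+1$ letters from $\{A,B,C\}$, only $A$ carries $\epsilon$ (and linearly), and antisymmetry of the bracket excludes words consisting of $2i+1$ copies of $A$, so each $\Delta\cH_{2i}$ has $\epsilon$-degree at most $2i$. Your extra claim that the degree $2i$ is actually attained goes beyond what the paper establishes and is only verified at $i=1$ (the nonvanishing of the BCH coefficients multiplying $\operatorname{ad}_A^{2i}(B)$ and $\operatorname{ad}_A^{2i}(C)$ is not checked for general $i$), but since the proposition is proved and used in the paper only as an upper bound, this does not affect the correctness of your proof.
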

\begin{proof}
It is can be proved that (see~\cite{Bravetti2020}), 
each correction $\Delta \cH_{2i}$ in~\eqref{eq:modH2} is the result of taking ${2i}+1$ nested Jacobi brackets. 
Since the Jacobi bracket is anti-symmetric, we may have at most $2i$ equal terms inside the nested brackets.
Considering that in the splitting~\eqref{eq:hamsplitting} only the $A$ term depends (linearly) on $\epsilon$,
and given the linearity of the Jacobi bracket,
the greatest power in $\epsilon$ is given by the term $\{A,\{A,\{\cdots,\{A,P\}_{\eta}\cdots\}_{\eta}\}_{\eta}\}_{\eta}$, with $P$ being either $B$ or $C$.
We conclude that the maximum degree in $\epsilon$ of $\Delta \cH_{2i}$ is just $2i$.
\end{proof}

From Proposition~\ref{prop:order} it follows that the largest power in $\epsilon$ and $\tau$ in each correction $\Delta \cH_{2i}$ in the modified Hamiltonian
is of the form $(\epsilon\tau)^{2i}$.  Recalling that $\epsilon\gg 1$ in this case, one can expect that to keep the sum~\eqref{eq:modH2} under control, special attention should be given to the size of the product $\epsilon\tau$.
This agrees with the results in Section~\ref{subsubsec:stiffcasenumerical}, where we observed that the limit cycle presents a noticeable deformation for
values of $\epsilon=50,100$ and $\tau=0.01$, or $\epsilon=100$ and $\tau=0.005$, that is, when $\epsilon\tau=0.5,1$.

\section{Geometric numerical integration of forced Li\'enard systems}\label{sec:forcedvdP}

To emphasise the applicability of contact integrators to general Liénard systems, we will now present a brief numerical application of contact integrators to Liénard systems with time--dependent forcing.
As usual, we take the van der Pol oscillator as our benchmark example, and study this system under the influence of a forcing term that is known to give rise to chaotic behavior~\cite{Parlitz1987,Pihajoki2014}.

We stress that this section is meant as an example of possible further applications and the results presented here are by no means meant to be exhaustive analyses or comparisons with the previous literature.
Moreover, we will focus on the numerical aspects and omit the analytical treatment of the modified Hamiltonians: since the computations are analogue to what we have already presented for the unforced van der Pol oscillator, we believe that adding them here would unnecessarily complicate the paper.
\medskip

In the simulations that follow we proceed in analogy to \cite{Pihajoki2014}.
We test the 2nd order contact integrator $S_2(\tau)(CBABC)$ and two different 6th order integrators: 
$S_6^e(\tau)(CBABC)$, with exact coefficients, and $S_6^a(\tau)(CBABC)$, with approximate coefficients taken from family A in Table~\ref{numsol}. 
(these are the integrators that have showed the best performance, cf.~Remark~\ref{rmk:familyA}).
All the comparisons are made with respect to the LSODA solver provided by \texttt{SciPy} \cite{2020SciPy-NMeth} with a relative accuracy parameter of $10^{-13}$ and absolute accuracy parameter of $10^{-15}$.

\subsection{The forced van der Pol oscillator}

Following \cite{Parlitz1987,Pihajoki2014}, we consider a forced van der Pol oscillator of the following form
	\begin{equation}\label{eq:fvdpo}
	    \ddot{x}=\epsilon(1-x^2)\dot{x}-x+A \cos(\omega t),
	\end{equation}
here $A$ is the amplitude of the forcing and $\omega$ its frequency.

Extending \eqref{eq:hamvanderpol} to a time--dependent contact Hamiltonian, we observe that the equation above can be recovered from
\begin{equation}
	\cH(q,p,s,t) = p s -\epsilon (1-q^2) s +q - A \cos(\omega t).
\end{equation}
Indeed, on the $(q,s)$ plane, the corresponding contact Hamiltonian system reduces to
	\begin{equation}
	    \begin{cases}
	    \dot{q} = s \\
	    \dot{s} = \epsilon(1-q^2)s-q+A \cos(\omega t).
	    \end{cases}
	\end{equation}

The nontrivial behaviour of this example is well known \cite{Parlitz1987}: e.g. for the couplings $A=\epsilon=5$ one can show that the system undergoes a bifurcation cascade from a {regular attractor ($\omega=2.457$) to a chaotic one (for $\omega=2.463$)}.

\begin{figure}[ht!]
	\centering
	\includegraphics[width=0.8\linewidth]{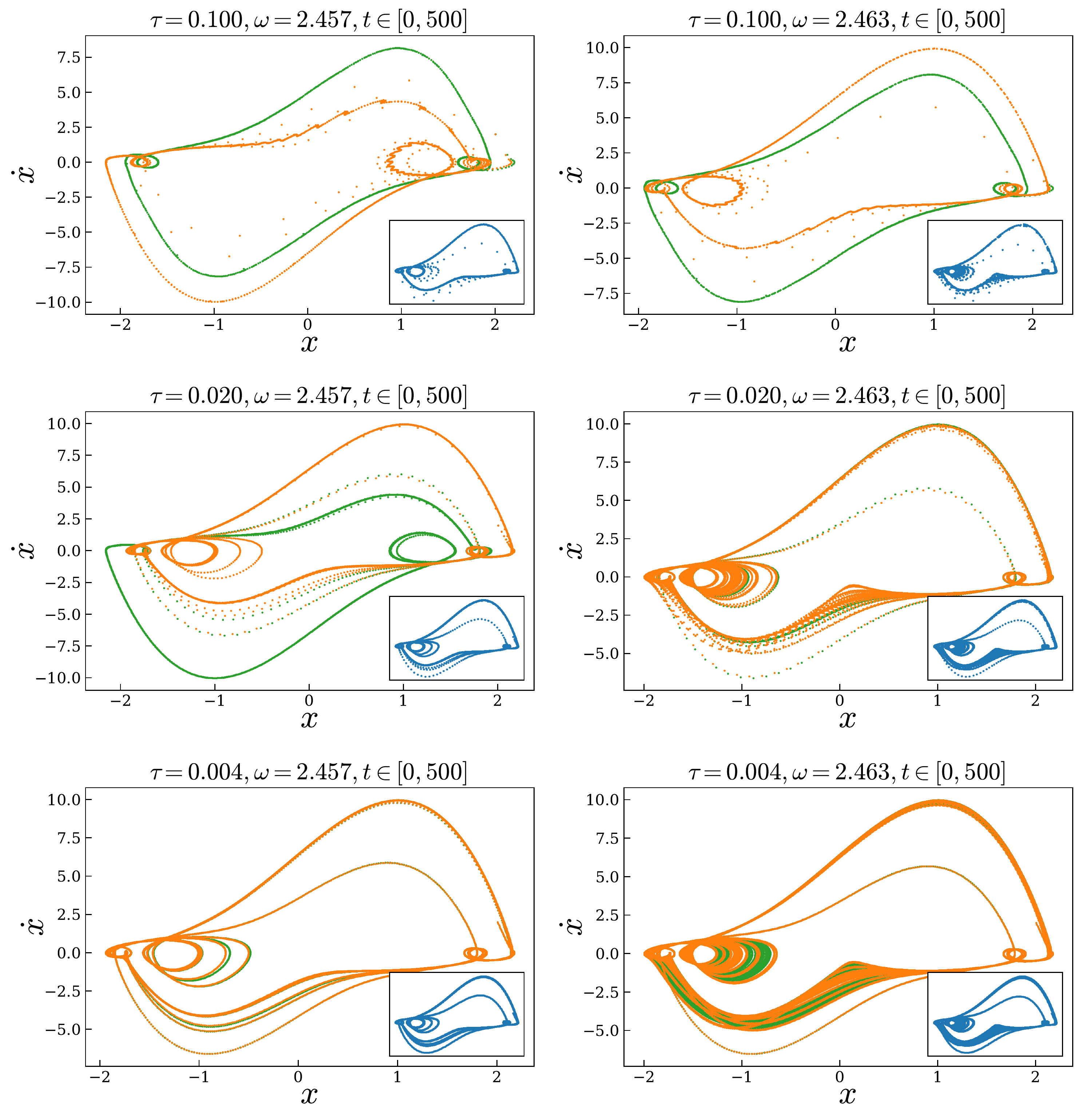}
	\caption{Orbit of the forced van der Pol oscillator with $(x_0, \dot x_0) = (2,2)$. The green dots correspond to the 2nd order integrator and the orange dots to a 6th order approximate integrator (CBABC) with the coefficients taken from family A in Table~\ref{numsol}. 
	Left: regular attractor. Right: strange attractor. From top to bottom the time step is decreasing. The inset plots contain the corresponding trajectory computed with LSODA. it is plotted separately because, besides the first row, it is virtually indistinguishable from the one obtained with the 6th order integrator.}
	\label{fig:bifcascade}
\end{figure}

In the numerical experiments, we propagate the system until $t=500$ and, unless differently specified, the time step is $\tau=0.02$.

As one can see in Figure \ref{fig:bifcascade}, even though we are dealing with a stiff problem, the method is capable of capturing the attractor even for large value of the time step and long integration intervals, rapidly converging to the correct solution as the time--step decreases.

This system in the chaotic regime, $\omega=2.463$, was also the example used to analyze the performances of the modified leapfrog methods introduced in \cite{Pihajoki2014}.
Even though the numerical test in \cite{Pihajoki2014} uses a 6th order integrator, we will still include a test for our second order integrator.

Even though both integrators are geometric in nature, explicit and with fixed time--step, the ones introduced in this paper present two main differences from those in \cite{Pihajoki2014}: they are based on contact geometry instead of symplectic one and they require the integration of only three variables (one of which is the time) instead of six.

In Figure~\ref{fig:orbitforcedvdp} we show the trajectories computed by the aforementioned integrators.
As one can see by comparing Figure~\ref{fig:errorcomparisonforcedvdp} and \cite[Figure 4]{Pihajoki2014}, despite the simplicity of the contact methods, their performance is comparable to the ones presented in \cite{Pihajoki2014}, with the approximate integrator performing better than the exact one: they give results comparable to an established differential equation solver, LSODA, with less computational work: for these simulations the amounts of vector field evaluations of LSODA is $1.4 \cdot 10^6$, while our second order integrator requires $0.1 \cdot 10^6$ evaluations and the sixth order one $0.3 \cdot 10^6$.
These results can also be contrasted with the amount of evaluations for the corresponding algorithms in \cite{Pihajoki2014}, which are $0.7 \cdot 10^6$ (Method 1) and $1.3 \cdot 10^6$ (Method 2).
By avoiding the phase space extension we obtained two concrete advantages: we have reduced both the computational cost of the integrator and the number of possible combinations of the splitting maps.

\begin{figure}[ht!]
	\centering
	\includegraphics[width=0.8\linewidth]{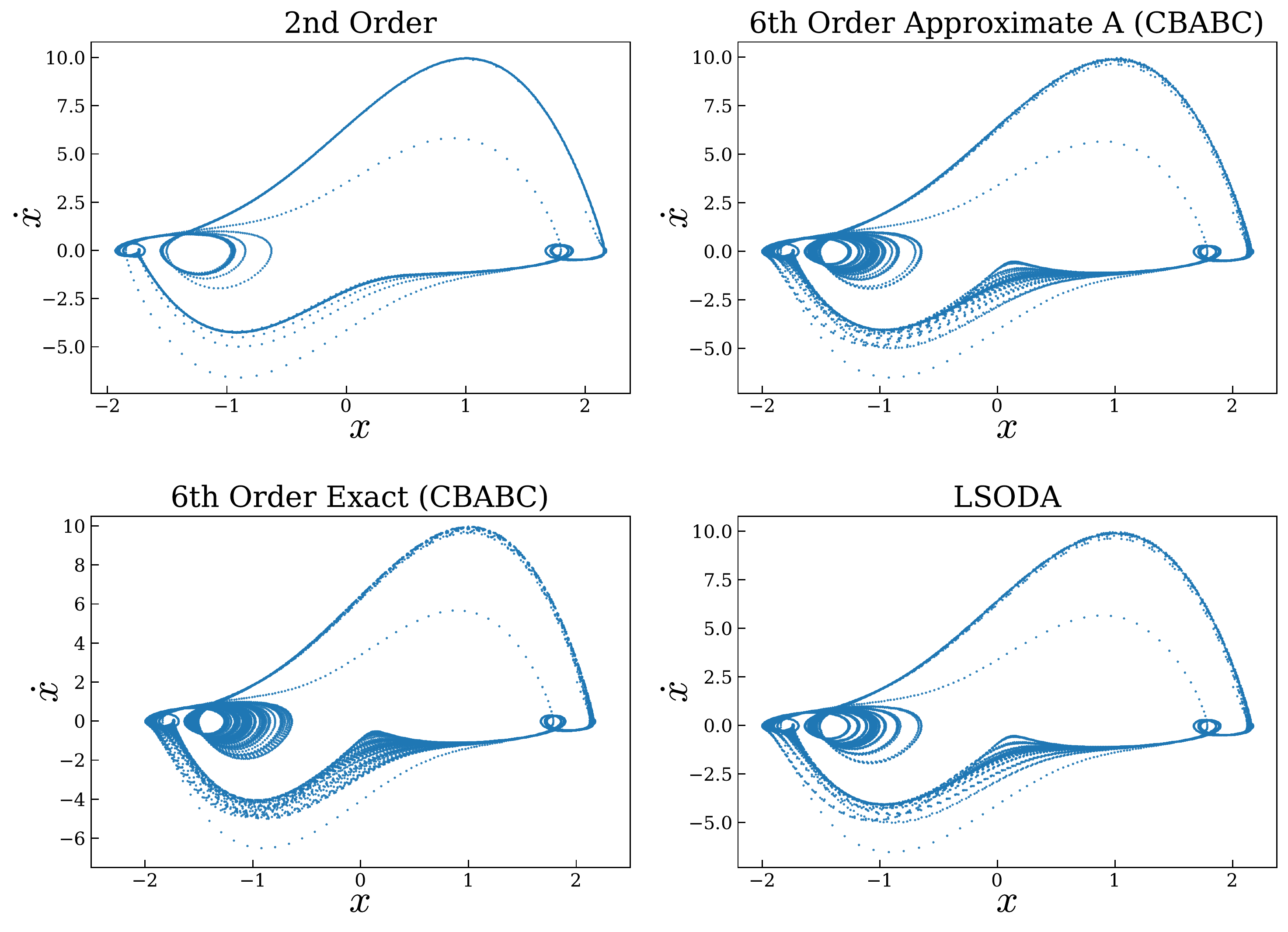}
	\caption{Numerical orbits of the forced van der Pol oscillator \eqref{eq:fvdpo} with $A=\mu=5$, ${\omega=2.463}$ and $(x_0, \dot x_0) = (2,2)$, with the reference contact integrators and LSODA.}
	\label{fig:orbitforcedvdp}
\end{figure}

\begin{figure}[ht!]
	\centering
	\includegraphics[width=0.8\linewidth]{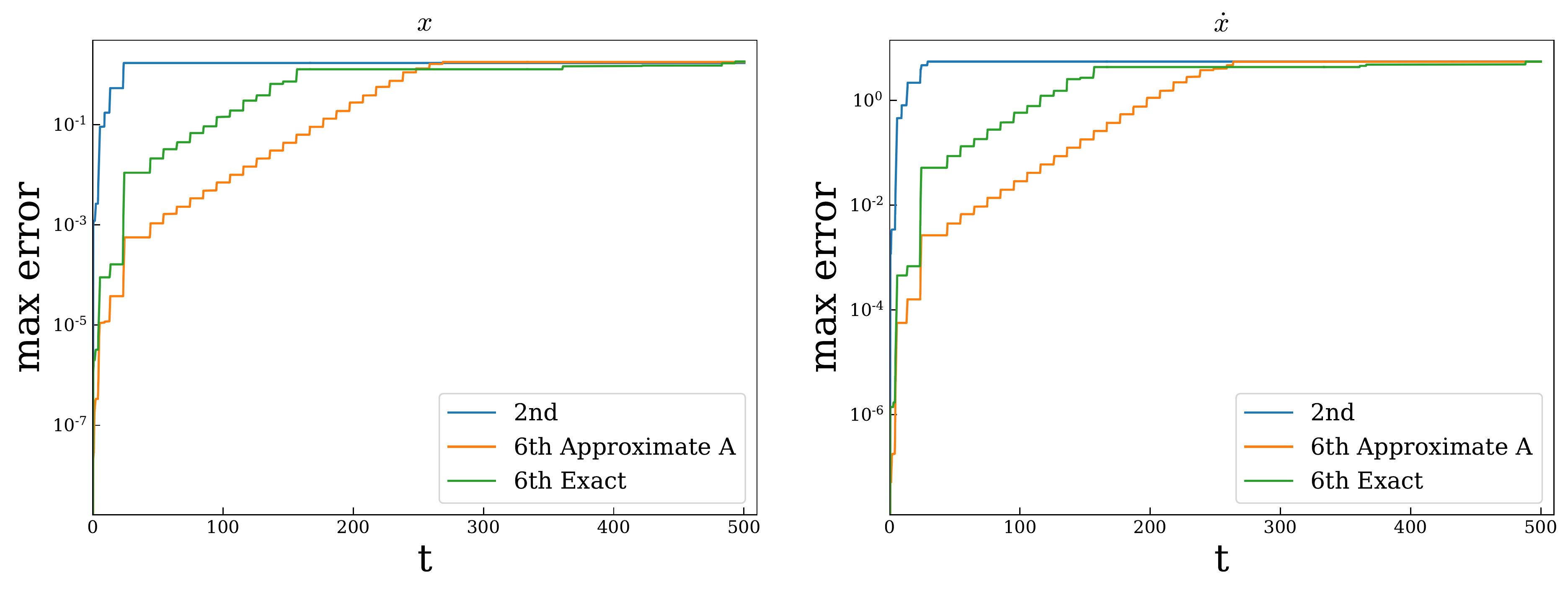}
	\caption{Maximum absolute errors in $x$ and $\dot x$ up to a given time for the reference contact integrators compared to the LSODA method along the orbit in Figure~\ref{fig:orbitforcedvdp}.}
	\label{fig:errorcomparisonforcedvdp}
\end{figure}

\section{Conclusions}\label{sec:conclusions}
In this work we have proposed a novel approach to the geometric numerical integration of an important class of nonlinear dynamical systems, that is, Li\'enard systems.
Such systems are planar systems having a limit cycle, and therefore they cannot be Hamiltonian in the symplectic sense in their original variables.
As a minimal extension, we have considered Li\'enard systems as 2-dimensional projections of contact Hamiltonian systems in three dimensions. 
This Hamiltonisation enables us to use the contact splitting integrators recently introduced in~\cite{Bravetti2020} and therefore to derive a new class of geometric numerical integrators for Li\'enard systems. 
We have used the paradigmatic example of the van der Pol oscillator to show that such formulation can be beneficial both for obtaining accurate numerical integrations of the dynamics at relatively small computational cost, and for deriving complementary analytical results, based on the use of the modified Hamiltonian and modified equations.

Although we have shown here some important results, several questions still remain to be addressed. For instance, we have not fully exploited the modified
Hamiltonian and modified equations in the stiff case; we have not considered further theoretical properties related to the existence of a Hamiltonian structure,
such as e.g.~the preservation of volumes in the 3-dimensional manifold, or the associated Lagrangian structure.
In this context, we remark that the approach investigated here is based on the simplest possible Hamiltonisation of Li\'enard systems by means of contact Hamiltonian systems,
which is obtained by a Hamiltonian that is linear (hence singular) in the momenta. Therefore to derive an associated Lagrangian structure one would have
to use the algorithm for singular contact Hamiltonian systems developed in~\cite{de2019singular}. From the numerical perspective, this could open the door to the use of 
contact variational integrators~\cite{vermeeren2019contact,simoes2020geometry}.
Moreover, other (contact) Hamiltonisations of Li\'enard and spiking systems might be possible, perhaps using non-standard contact structures, and therefore future work should also focus on alternative constructions.

\subsection*{Acknowledgements}
The authors would like to thank Qihuai Liu, Arjan Van der Schaft and Mats Vermeeren for multiple interesting discussions and useful comments, and Mr. Edoardo Zadra for providing emergency computational facilities during the pandemic.
This research was partially supported by the second author's starter grant and by NWO Visitor Travel Grant 040.11.698 that sponsored the visit of AB at the Bernoulli Institute. M. Seri research is partially supported by the NWO co-fund grant 613.009.10.


\printbibliography

@article{2020SciPy-NMeth,
  author  = {{Virtanen}, Pauli and {Gommers}, Ralf and {Oliphant},
         Travis E. and {Haberland}, Matt and {Reddy}, Tyler and
         {Cournapeau}, David and {Burovski}, Evgeni and {Peterson}, Pearu
         and {Weckesser}, Warren and {Bright}, Jonathan and {van der Walt},
         St{\'e}fan J.  and {Brett}, Matthew and {Wilson}, Joshua and
         {Jarrod Millman}, K.  and {Mayorov}, Nikolay and {Nelson}, Andrew
         R.~J. and {Jones}, Eric and {Kern}, Robert and {Larson}, Eric and
         {Carey}, CJ and {Polat}, {\.I}lhan and {Feng}, Yu and {Moore},
         Eric W. and {Vand erPlas}, Jake and {Laxalde}, Denis and
         {Perktold}, Josef and {Cimrman}, Robert and {Henriksen}, Ian and
         {Quintero}, E.~A. and {Harris}, Charles R and {Archibald}, Anne M.
         and {Ribeiro}, Ant{\^o}nio H. and {Pedregosa}, Fabian and
         {van Mulbregt}, Paul and {Contributors}, SciPy 1. 0},
  title   = {{SciPy 1.0: Fundamental Algorithms for Scientific
                  Computing in Python}},
  journal = {Nature Methods},
  year    = {2020},
  volume  = {17},
  pages   = {261--272},
  adsurl  = {https://rdcu.be/b08Wh},
  doi     = {https://doi.org/10.1038/s41592-019-0686-2}
}

@article{Amore2018,
  author    = {Paolo Amore and John P. Boyd and Francisco M. Fern{\'{a}}ndez},
  title     = {High order analysis of the limit cycle of the van der Pol oscillator},
  journal   = {Journal of Mathematical Physics},
  year      = {2018},
  volume    = {59},
  number    = {1},
  pages     = {012702},
  month     = {1},
  doi       = {10.1063/1.5016961},
  publisher = {{AIP} Publishing}
}

@article{Andersen1982,
  author    = {C. M. Andersen and James F. Geer},
  title     = {Power Series Expansions for the Frequency and Period of the Limit Cycle of the Van Der Pol Equation},
  journal   = {{SIAM} Journal on Applied Mathematics},
  year      = {1982},
  volume    = {42},
  number    = {3},
  pages     = {678--693},
  month     = {6},
  doi       = {10.1137/0142047},
  publisher = {Society for Industrial {\&} Applied Mathematics ({SIAM})}
}

@book{arnol2013mathematical,
  title     = {Mathematical methods of classical mechanics},
  author    = {Arnol'd, Vladimir Igorevich},
  volume    = {60},
  year      = {2013},
  publisher = {Springer Science \& Business Media}
}

@book{Aroldmechanics,
  title     = {Mathematical Methods of Classical Mechanics},
  publisher = {Springer New York},
  year      = {2010},
  author    = {Arnol'd, V. I.},
  isbn      = {1441930876},
  date      = {2010-12-01},
  ean       = {9781441930873},
  pagetotal = {536}
}

@book{blair2010riemannian,
  title     = {Riemannian geometry of contact and symplectic manifolds},
  author    = {Blair, David E},
  year      = {2010},
  publisher = {Springer Science \& Business Media}
}

@article{bravetti2016thermostat,
  title     = {Thermostat algorithm for generating target ensembles},
  author    = {Bravetti, A and Tapias, D},
  journal   = {Physical Review E},
  volume    = {93},
  number    = {2},
  pages     = {022139},
  year      = {2016},
  publisher = {APS}
}

@article{Bravetti2017,
  author    = {Alessandro Bravetti and Hans Cruz and Diego Tapias},
  title     = {Contact Hamiltonian mechanics},
  journal   = {Annals of Physics},
  year      = {2017},
  volume    = {376},
  pages     = {17--39},
  doi       = {10.1016/j.aop.2016.11.003},
  publisher = {Elsevier {BV}}
}

@article{bravetti2019contact,
  title     = {Contact geometry and thermodynamics},
  author    = {Bravetti, Alessandro},
  journal   = {International Journal of Geometric Methods in Modern Physics},
  volume    = {16},
  number    = {supp01},
  pages     = {1940003},
  year      = {2019},
  publisher = {World Scientific}
}

@article{Bravetti2020,
  author    = {Alessandro Bravetti and Marcello Seri and Mats Vermeeren and Federico Zadra},
  title     = {Numerical integration in Celestial Mechanics: a case for contact geometry},
  journal   = {Celestial Mechanics and Dynamical Astronomy},
  year      = {2020},
  volume    = {132},
  number    = {1},
  month     = {1},
  doi       = {10.1007/s10569-019-9946-9},
  publisher = {Springer Science and Business Media {LLC}}
}

@article{carinena2019nonstandard,
  title     = {Nonstandard Hamiltonian structures of the Li{\'e}nard equation and contact geometry},
  author    = {Cari{\~n}ena, Jos{\'e} F and Guha, Partha},
  journal   = {International Journal of Geometric Methods in Modern Physics},
  volume    = {16},
  number    = {supp01},
  pages     = {1940001},
  year      = {2019},
  publisher = {World Scientific}
}

@article{choi1973extended,
  title     = {An extended canonical perturbation method},
  author    = {Choi, JS and Tapley, Byron D},
  journal   = {Celestial mechanics},
  volume    = {7},
  number    = {1},
  pages     = {77--90},
  year      = {1973},
  publisher = {Springer}
}

@article{ChRaSt2018,
  author   = {Chen, Zhengdao and Raman, Baranidharan and Stern, Ari},
  doi      = {10.1137/18M123390X},
  issn     = {1064-8275},
  journal  = {SIAM Journal on Scientific Computing},
  keywords = {080733565,1,10,1137,62f15,62g08,65c05,ams subject classifications,bayesian,doi,introduction and examples,monte carlo,nonparametric regression,recognize the stochas-,scientists have come to,uncertainty quantification},
  number   = {1},
  pages    = {B273--B298},
  title    = {{Structure-Preserving Numerical Integrators for Hodgkin--Huxley-Type Systems}},
  url      = {https://epubs.siam.org/doi/10.1137/18M123390X},
  volume   = {42},
  year     = {2020}
}

@article{ciaglia2018contact,
  title     = {Contact manifolds and dissipation, classical and quantum},
  author    = {Ciaglia, Florio M and Cruz, Hans and Marmo, Giuseppe},
  journal   = {Annals of Physics},
  volume    = {398},
  pages     = {159--179},
  year      = {2018},
  publisher = {Elsevier}
}

@article{de2019contact,
  title     = {Contact Hamiltonian systems},
  author    = {de Le{\'o}n, Manuel and Lainz Valc{\'a}zar, Manuel},
  journal   = {Journal of Mathematical Physics},
  volume    = {60},
  number    = {10},
  pages     = {102902},
  year      = {2019},
  publisher = {AIP Publishing LLC}
}

@article{de2019singular,
  title   = {Singular Lagrangians and precontact Hamiltonian systems},
  author  = {De Leon, Manuel and Valc{\'a}zar, Manuel Lainz},
  journal = {arXiv preprint arXiv:1904.11429},
  year    = {2019}
}

@article{gaset2019new,
  title   = {New contributions to the Hamiltonian and Lagrangian contact formalisms for dissipative mechanical systems and their symmetries},
  author  = {Gaset, Jordi and Gr{\`a}cia, Xavier and Mu{\~n}oz-Lecanda, Miguel C and Rivas, Xavier and Rom{\'a}n-Roy, Narciso},
  journal = {arXiv preprint arXiv:1907.02947},
  year    = {2019}
}

@article{gaset2020contact,
  title     = {A contact geometry framework for field theories with dissipation},
  author    = {Gaset, Jordi and Gr{\`a}cia, Xavier and Mu{\~n}oz-Lecanda, Miguel C and Rivas, Xavier and Rom{\'a}n-Roy, Narciso},
  journal   = {Annals of Physics},
  volume    = {414},
  pages     = {168092},
  year      = {2020},
  publisher = {Elsevier}
}

@book{geiges2008introduction,
  title     = {An introduction to contact topology},
  author    = {Geiges, Hansj{\"o}rg},
  volume    = {109},
  year      = {2008},
  publisher = {Cambridge University Press}
}

@article{lienard1928,
  author  = {A. Liénard},
  title   = {Etude des oscillations entretenues},
  journal = {Revue Générale de l'électricité},
  date    = {1928},
  volume  = {23},
  pages   = {901--912}
}

@article{liu:contactext,
  author   = {Qihuai Liu},
  year     = {2020},
  journal  = {Private communication}
}

@article{Mrugala_1990,
  author    = {Ryszard Mrugala and James D. Nulton and J. Christian Schön and Peter Salamon},
  title     = {Statistical approach to the geometric structure of thermodynamics},
  journal   = {Physical Review A},
  year      = {1990},
  volume    = {41},
  number    = {6},
  pages     = {3156--3160},
  month     = {3},
  doi       = {10.1103/physreva.41.3156},
  publisher = {American Physical Society ({APS})}
}

@article{nucci2010lagrangians,
  title     = {Lagrangians for dissipative nonlinear oscillators: the method of Jacobi last multiplier},
  author    = {Nucci, MC and Tamizhmani, KM},
  journal   = {Journal of Nonlinear Mathematical Physics},
  volume    = {17},
  number    = {2},
  pages     = {167--178},
  year      = {2010},
  publisher = {Taylor \& Francis}
}

@article{Parlitz1987,
  author    = {Ulrich Parlitz and Werner Lauterborn},
  title     = {Period-doubling cascades and devil's staircases of the driven van der Pol oscillator},
  journal   = {Physical Review A},
  year      = {1987},
  volume    = {36},
  number    = {3},
  pages     = {1428--1434},
  doi       = {10.1103/physreva.36.1428},
  publisher = {American Physical Society ({APS})}
}

@book{Perko1991,
  title     = {Differential Equations and Dynamical Systems},
  publisher = {Springer {US}},
  year      = {1991},
  author    = {Lawrence Perko},
  doi       = {10.1007/978-1-4684-0392-3}
}

@article{Pihajoki2014,
  author    = {Pauli Pihajoki},
  title     = {Explicit methods in extended phase space for inseparable Hamiltonian problems},
  journal   = {Celestial Mechanics and Dynamical Astronomy},
  year      = {2014},
  volume    = {121},
  number    = {3},
  pages     = {211--231},
  doi       = {10.1007/s10569-014-9597-9},
  publisher = {Springer Science and Business Media {LLC}}
}

@article{Shah2015,
  author    = {Tirth Shah and Rohitashwa Chattopadhyay and Kedar Vaidya and Sagar Chakraborty},
  title     = {Conservative perturbation theory for nonconservative systems},
  journal   = {Physical Review E},
  year      = {2015},
  volume    = {92},
  number    = {6},
  doi       = {10.1103/physreve.92.062927},
  publisher = {American Physical Society ({APS})}
}

@article{simoes2020geometry,
  title   = {On the geometry of discrete contact mechanics},
  author  = {Simoes, Alexandre Anahory and de Diego, David Mart{\'\i}n and de Le{\'o}n, Manuel and Valc{\'a}zar, Manuel Lainz},
  journal = {arXiv preprint arXiv:2003.11892},
  year    = {2020}
}

@article{vanderpol1920,
  author  = {B. Van der Pol},
  title   = {A theory of the amplitude of free and forced triode vibrations},
  journal = {Radio Review (London)},
  year    = {1920}
}

@article{vdsm2018,
  author         = {Van der Schaft, Arjan and Maschke, Bernhard},
  title          = {Geometry of Thermodynamic Processes},
  journal        = {Entropy},
  volume         = {20},
  year           = {2018},
  number         = {12},
  article-number = {925},
  url            = {https://www.mdpi.com/1099-4300/20/12/925},
  issn           = {1099-4300},
  doi            = {10.3390/e20120925}
}

@article{vermeeren2019contact,
  author  = {{Vermeeren}, Mats and {Bravetti}, Alessandro and {Seri}, Marcello},
  title   = {{Contact variational integrators}},
  journal = {J. Phys. A: Math. Theor.},
  volume  = {52},
  year    = {2019},
  pages   = {445206},
  doi     = {10.1088/1751-8121/ab4767}
}

@article{Yoshida,
  author   = {Yoshida, Haruo},
  title    = {Construction of higher order symplectic integrators},
  journal  = {Physics Letters. A},
  year     = {1990},
  volume   = {150},
  number   = {5-7},
  pages    = {262--268},
  issn     = {0375-9601},
  doi      = {10.1016/0375-9601(90)90092-3},
  keywords = {70H05 (58F05 70-08)},
  mrnumber = {1078768}
}

@misc{Zadra2020,
  author    = {Zadra, Federico and Seri, Marcello and Bravetti, Alessandro},
  title     = {Support Code for Geometric numerical integration of Lìenard systems via a contact Hamiltonian approach.},
  year      = {2020},
  doi       = {10.5281/ZENODO.3814411},
  keywords  = {contact geometry, hamiltonian integrators, Van der Pol, geometric integrators, Lienard systems},
  publisher = {Zenodo},
  version   = {v1.0}
}
\end{document}